\def\shorttitle{\sc ThŽorie quantique des champs}
\def\shortauthor{\sc\leftmark}
\newtheorem{theorem}{Theorem}[section]
\newtheorem{definition}{Definition}[subsection]
\newtheorem{lemma}{Lemma}[subsection]
\newtheorem{prop}{Proposition}[subsection]
\newtheorem{corollary}{Corollary}[subsection]
\begin{document}

\title{Cartan-K\"ahler Theory and Applicationsto Local Isometric and Conformal Embedding}

\author{Nabil Kahouadji}

\date{}





\maketitle

\thispagestyle{empty}

\begin{abstract}
The goal of this lecture is to give a brief introduction to Cartan-K\"ahler's theory. As examples to the application of this
theory, we choose the local isometric and conformal embedding. We provide lots of details 
and explanations of the calculation and the tools used\footnote{See the Master Thesis \cite{M2Thesis} on which the lecture is based}.
\end{abstract}

\section{Cartan's Structure Equations}

Let $\xi=(E,\pi,M)$ be a vector bundle. Denote $(\mathfrak{X}(M), [, ])$ the Lie algebra of vector fields on $M$ and $\Gamma(E)$ the moduli space of cross-sections of the vector bundle $E$.

\subsection{Connection on a vector bundle} 
A connection on a vector bundle $E$ is a choice of complement of vertical vector fields on $E$. A connection induces a covariant differential operator $\nabla$ on $E$. \linebreak A covariant derivative $\nabla$ on a vector bundle $E$ is a way to ''differentiate'' bundle sections along tangent vectors and it is sometimes called a connection. 

\begin{definition}
A connection  on a vector bundle $E$ is an linear operator defined as follows:
\begin{equation}\nonumber
\begin{split}
\nabla:\mathfrak{X}(M)\times\Gamma(E)&\longrightarrow\Gamma(E)\\
(X,S)&\longmapsto\nabla_{X}S
\end{split}
\end{equation}
\newpage
satisfying

\begin{align}
\nabla_{(X_{1}+X_{2})}S&=\nabla_{X_{1}}S+\nabla_{X_{2}}S , & \nabla_{(fX)}S&=f\nabla_{X}S \notag\\
\nabla_{X}(S_{1}+S_{2})&=\nabla_{X}S_{1}+\nabla_{X}S_{2} , &
\nabla_{X}(fS)&=X(f)S+f\nabla_{X}S \notag
\end{align}

$\forall X, X_{1}, X_{2}, Y, Y_{1}, Y_{2}\in \mathfrak{X}(M)$ and
$\forall S, S_{1}, S_{2}\in \Gamma(E)$.

\end{definition}
\subsubsection{Curvature of a Connection}
\begin{definition}The curvature of a connection $\nabla$ is a vector valued 2-form
\begin{equation}\nonumber
\begin{split}
\mathcal{R}:\mathfrak{X}(M)\times\mathfrak{X}(M)\times\Gamma(E)&\longrightarrow\Gamma(E)\\
X,Y,S&\longmapsto\mathcal{R}(X,Y)S
\end{split}
\end{equation}
defined by $\mathcal{R}(X,Y)S=\Big([\nabla_{X},\ \nabla_{Y}]-\nabla_{[X,\
Y]}\Big)S$
\end{definition}
\begin{theorem}\label{ThCourbureConnexion}
For any $f,g$ and $h$ smooth functions on $M$, $S\in \Gamma(E)$ a section of $\xi$ and
$X,Y \in \mathfrak{X}(M)$ two tangent vector fields of  $M$, we have
\begin{equation}
\mathcal{R}(fX,gY)(hS)=fgh\mathcal{R}(X,Y)S
\end{equation}
\end{theorem}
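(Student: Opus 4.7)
The plan is to prove tensoriality of $\mathcal{R}$ in each of its three arguments separately, since multiplication by smooth functions commutes and I can then conclude
\[
\mathcal{R}(fX,gY)(hS) = f\,\mathcal{R}(X,gY)(hS) = fg\,\mathcal{R}(X,Y)(hS) = fgh\,\mathcal{R}(X,Y)S.
\]
Throughout I would expand the definition $\mathcal{R}(X,Y)S = \nabla_X\nabla_Y S - \nabla_Y\nabla_X S - \nabla_{[X,Y]}S$ and use the two Leibniz-type rules in the definition of a connection, together with the classical identity
\[
[fX,\,gY] = fg\,[X,Y] + fX(g)\,Y - gY(f)\,X.
\]

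First I would establish linearity in $X$ by computing $\mathcal{R}(fX,Y)S$. The term $\nabla_{fX}\nabla_Y S$ equals $f\nabla_X\nabla_Y S$ immediately, while $\nabla_Y \nabla_{fX}S = \nabla_Y(f\nabla_X S) = Y(f)\nabla_X S + f\nabla_Y\nabla_X S$. The bracket term becomes $\nabla_{f[X,Y]-Y(f)X}S = f\nabla_{[X,Y]}S - Y(f)\nabla_X S$. The two $Y(f)\nabla_X S$ contributions cancel, leaving $f\,\mathcal{R}(X,Y)S$. The argument for $\mathcal{R}(X,gY)S = g\,\mathcal{R}(X,Y)S$ is identical by the antisymmetry $\mathcal{R}(X,Y) = -\mathcal{R}(Y,X)$, which follows directly from $[\nabla_X,\nabla_Y] = -[\nabla_Y,\nabla_X]$ and $[X,Y] = -[Y,X]$.

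The more delicate step is tensoriality in $S$, since $h$ now sits inside the section slot where second derivatives act. I would expand
\[
\nabla_X\nabla_Y(hS) = \nabla_X\bigl(Y(h)S + h\nabla_Y S\bigr) = X(Y(h))S + Y(h)\nabla_X S + X(h)\nabla_Y S + h\nabla_X\nabla_Y S,
\]
and similarly with $X$ and $Y$ exchanged, then subtract. The mixed terms $Y(h)\nabla_X S$ and $X(h)\nabla_Y S$ cancel pairwise, and the remaining scalar coefficient of $S$ is $X(Y(h)) - Y(X(h)) = [X,Y](h)$. This is exactly what is produced by $\nabla_{[X,Y]}(hS) = [X,Y](h)S + h\nabla_{[X,Y]}S$, so subtracting it kills the $S$-coefficient and leaves $h\,\mathcal{R}(X,Y)S$.

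The main obstacle is purely bookkeeping: keeping track of the six terms that arise from applying Leibniz twice and verifying that the first-order terms in derivatives of $h$ cancel in pairs while the second-order term is absorbed exactly by the bracket correction. Once this cancellation is organized cleanly, combining the three tensoriality statements yields the identity.
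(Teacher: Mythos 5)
Your proof is correct. The paper states Theorem \ref{ThCourbureConnexion} without supplying any proof, and your argument is the standard direct verification: the computation of $\mathcal{R}(fX,Y)S$ using $[fX,Y]=f[X,Y]-Y(f)X$, the reduction of the $gY$ case to antisymmetry, and the double-Leibniz expansion in the section slot with the $X(Y(h))-Y(X(h))=[X,Y](h)$ term absorbed by $\nabla_{[X,Y]}(hS)$ are all carried out accurately, so nothing is missing.
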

\subsubsection{Connection and  Curvature Forms } 
Let $\xi=(E,\pi,M)$ be a vector bundle over a smooth manifold $M$ with an \linebreak $r$-dimensional vector space $E$ as a standard fiber. Let $\nabla$ be a connection on  $\xi$ and $\mathcal{R}$ its
curvature.  We deonte by $\mathcal{U}$ an open set of $M$.

\begin{definition}
A set of $r$ local sections $S=(S_{1},S_{2},\dots , S_{r})$ of $\xi$ is called a frame field (or a moving frame) if $\forall p \in
\mathcal{U}$, $S(p)=\Big(S_{1}(p),S_{2}(p),\dots , S_{r}(p)\Big)$
form a basis of the fiber  $E_{p}$ over $p$.
\end{definition}

Let $S=(S_{1},S_{2},\dots , S_{r})$ be a frame field,$\nabla$ a connection
on $\xi$ and \linebreak $X\in \mathfrak{X}(M)$ a tangent vector field on  $M$.
Then $\nabla_{X}S_{j}$ is another section of  $\xi$ and it can be expressed in
the frame field $S$ as follows:
\begin{equation}
\nabla_{X}S_{j}=\sum_{i=1}^{r}\omega_{ij}(X)S_{i}
\end{equation}
where $\omega_{ij}\in \mathcal{A}^{1}(M)$ are
differential 1-forms\footnote{$ \mathcal{A}^{k}(M)$ denote the set of differential $k$-form on $M$ (we choose this notation instead of the standard notation $\Omega^{k}(M)$ to not mix   with the curvature form.}
 on $M$ and $\omega_{ij}(X)$ are  smooth functions on $M$.

\begin{definition}
The $r\times r$ matrix $\omega=(\omega_{ij})$
 is called the  connection 1-form of $\nabla$.
\end{definition}

The connection $\nabla$ is completely determined by the matrix
$\omega=(\omega_{ij})$. Conversely, a matrix of
differential 1-forms on $M$ determines a connection \linebreak (in a non-invariant way depending on the choice of the moving frame).

\paragraph{}Let $X,Y \in \mathfrak{X}(M)$ two tangent vector fields. Then
$\mathcal{R}(X,Y)S_{j}$ are sections of  \linebreak  $\xi$., and can be
expressed on the frame field $S$ as follows:
\begin{equation}
\mathcal{R}(X,Y)S_{j}=\sum_{i=1}^{r}\Omega_{ij}(X,Y)S_{i}
\end{equation}
where $\Omega_{ij}\in \mathcal{A}^{2}(M)$ are
differential 2-forms on $M$ and $\Omega_{ij}(X,Y)$ are smooth functions on $M$.

\begin{definition}
The $r\times r$matrix $\Omega=(\Omega_{ij})$ whoose entries are differential
2-forms, is called the  curvature 2-form of the connection $\nabla$.
\end{definition}

We state the following theorem\footnote{In the tangent bundle case, this theorem gives Cartan's second equation, as we will see later.} that gives the relation
between the  \linebreak connection 1-form $\omega$
 and  the curvature 2-form $\Omega$.
\begin{theorem}\label{ThFormConnexFormCourbure}
\begin{equation}\label{EquaStructCartan}
d\omega+\omega\wedge\omega = \Omega \quad\text{(matrix form)}
\end{equation}
or
\begin{equation}\label{2ndequastructureCartan}
d\omega_{ij}+\sum_{k=1}^{m}\omega_{ik}\wedge\omega_{kj}=\Omega_{ij} \quad\text{(on components)}
\end{equation}
\end{theorem}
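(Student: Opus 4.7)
The plan is to compute $\mathcal{R}(X,Y)S_j$ directly from its definition, expand everything in the frame $S$, and compare the coefficients with the right-hand side of the claimed equation. It suffices to prove the componentwise version \eqref{2ndequastructureCartan}, since the matrix version \eqref{EquaStructCartan} is only repackaging.

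First, I would compute $\nabla_Y S_j = \sum_k \omega_{kj}(Y)\,S_k$, and then apply $\nabla_X$. Using the Leibniz-type rule $\nabla_X(f S) = X(f) S + f \nabla_X S$ from the definition of a connection, I get
\begin{equation}\nonumber
\nabla_X \nabla_Y S_j \;=\; \sum_k X\bigl(\omega_{kj}(Y)\bigr)\, S_k \;+\; \sum_{i,k} \omega_{kj}(Y)\,\omega_{ik}(X)\,S_i.
\end{equation}
Swapping $X \leftrightarrow Y$ gives an analogous expression for $\nabla_Y \nabla_X S_j$, and finally $\nabla_{[X,Y]}S_j = \sum_i \omega_{ij}([X,Y])\,S_i$. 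Subtracting these three, the coefficient of $S_i$ in $\mathcal{R}(X,Y)S_j$ becomes
\begin{equation}\nonumber
X\bigl(\omega_{ij}(Y)\bigr) - Y\bigl(\omega_{ij}(X)\bigr) - \omega_{ij}([X,Y]) \;+\; \sum_k \bigl(\omega_{ik}(X)\omega_{kj}(Y) - \omega_{ik}(Y)\omega_{kj}(X)\bigr).
\end{equation}

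The second step is to recognize the two halves of this expression. The first three terms are exactly the intrinsic formula for the exterior derivative of a 1-form, $d\alpha(X,Y) = X(\alpha(Y)) - Y(\alpha(X)) - \alpha([X,Y])$, applied to $\alpha = \omega_{ij}$, hence equal $d\omega_{ij}(X,Y)$. The remaining sum is precisely $\sum_k (\omega_{ik}\wedge\omega_{kj})(X,Y)$ by the standard formula for the wedge of two 1-forms. Since by definition $\mathcal{R}(X,Y)S_j = \sum_i \Omega_{ij}(X,Y)\,S_i$, comparing coefficients in the frame $S$ yields \eqref{2ndequastructureCartan}.

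The main difficulty is not conceptual but notational: keeping the summation indices straight through the double application of $\nabla$, and being careful that the quantities $\omega_{kj}(Y)$ are smooth functions on $M$ so that $X$ acts on them via the derivation rule, not via the tensorial rule $\nabla_{fX}$. Once the Leibniz rule is applied correctly to $\nabla_X\bigl(\omega_{kj}(Y)\,S_k\bigr)$, everything falls into place by matching against the intrinsic formulas for $d$ and $\wedge$; no integration by parts, no choice of local coordinates, and no appeal to Theorem \ref{ThCourbureConnexion} is required, although Theorem \ref{ThCourbureConnexion} implicitly guarantees that $\Omega_{ij}$ really is a 2-form (i.e.\ $\mathcal{A}^0(M)$-bilinear and alternating in $X,Y$), which is consistent with the right-hand side being a genuine 2-form on $M$.
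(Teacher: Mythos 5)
Your computation is correct and is the standard direct verification: expanding $[\nabla_X,\nabla_Y]S_j-\nabla_{[X,Y]}S_j$ in the frame via the Leibniz rule and identifying the first-order terms with the intrinsic formula $d\alpha(X,Y)=X(\alpha(Y))-Y(\alpha(X))-\alpha([X,Y])$ and the quadratic terms with $(\omega_{ik}\wedge\omega_{kj})(X,Y)$, under the consistent (no $\tfrac12$) conventions for $d$ and $\wedge$. The paper states Theorem \ref{ThFormConnexFormCourbure} without proof, so there is nothing to compare against; your argument is exactly the one the statement presupposes, and your closing remark that Theorem \ref{ThCourbureConnexion} is what guarantees $\Omega_{ij}$ is a genuine 2-form is a correct and worthwhile observation.
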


\subsection{The Induced Connection} Let  $\xi=(E,\pi, M)$ and $\xi
'=(E',\pi ', M)$ be two vector bundles on $M$.
Consider a map  $f:M\longrightarrow M$ and denote 
$\tilde{f}:E\longrightarrow E'$ the associated bundle map i.e. $(f, \tilde{f})$ satisfies the following commutative diagramme:
$$
\begin{xy}
\xymatrix{
      E \ar[r]^{\tilde{f}} \ar[d]_{\pi}    &   E' \ar[d]^{\pi '}\\
      M \ar[r]_{f}             &   M}
\end{xy}
$$

If $\nabla '$ is a connection on $E'$, the vector bundle morphism induces a pull-back connection on $E$

\begin{equation}
\nabla=\tilde{f}^{\ast}\nabla '
\end{equation}

such that for any $S'\in \Gamma(E')$ and $X\in \mathfrak{X}(M)$, $\nabla_{X}(\tilde{f}^{\ast}S')= \linebreak (\tilde{f}^{\ast}\nabla
')_{X}(\tilde{f}^{\ast}S')= \tilde{f}^{\ast}\Big(\nabla_{f_{\ast}X}S'\Big)$ where $f_{\ast ,p}:T_{p}M\longrightarrow T_{f(p)}M$ is the  \linebreak linear tangent map .\\

We can also induce a connection on $\xi$ by another way. 
The connection $\nabla '$ is completely determined by the matrix of differential 
 $1$-forms $\omega'=(\omega_{ij}')$, and we define $\nabla$  by the matrix 
$\omega$ whose entries $\omega_{ij}$ are the pull-back of $\omega_{ij}'$ by$\tilde{f}$, i.e. $\omega=\tilde{f}^{\ast}\omega '$.\\

The pull back commute with the exterior differentiation and with the \linebreak  exterior product\footnote{$d(f^{\ast}\alpha)=f^{\ast}(d\alpha)$ and
$f^{\ast}(\alpha\wedge\beta)=f^{\ast}(\alpha)\wedge
f^{\ast}(\beta)$ for all $\alpha , \beta \in \mathcal{A}(M)$.},
so, the curvature 2-form  $\Omega$ of $\nabla$  is the 
pull back of the curvature 2-form of  $\nabla '$, i.e.
$\Omega=\tilde{f}^{\ast}\Omega '$.

\subsection{Metric Connection} Let $\xi=(E,\pi , M)$ be a vector bundle. We denote by  $\nabla$ a connection on  \linebreak $\xi$ determined by a matrix of  1-forms $\omega$. Let $\Omega$ be the associated curvature  \linebreak 2-form and $g$ a Riemannian metric on $\xi$ (i.e. a positively-defined scalar product on each fiber).

\begin{definition}
$\nabla$ is a connection on $\xi$ compatible with the metric  $g$ (or a metric connection) if  $\nabla$ satisfies to the following property (Leibniz's identity):
\begin{equation}
\begin{split}
&\nabla_{X}\Big(g(S_{1},S_{2})\Big)=g(\nabla_{X}S_{1},S_{2})+g(S_{1}  
\nabla_{X}S_{2})\\ & \forall S_{1},S_{2}\in \Gamma(E), \text{ and
} \forall X\in \mathfrak{X}(M)
\end{split}
\end{equation}
\end{definition}

\begin{prop}\label{PropMatriceConnexAntiSym}
Let $S=(S_{1},S_{2},\dots , S_{n})$ be an orthonormal frame field with respect to $g$, i.e. $g_{p}(S_{i},S_{j})=\delta_{ij}$ for all
$p\in \mathcal{U}$, $i,j=1,\dots ,r$, then the matrix of  1-forms $\omega$ associated to  $S$  and  the curvature matrix of 2-form  are both skew-symmetric.
\end{prop}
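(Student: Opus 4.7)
The plan is to exploit the fact that $g(S_i,S_j)=\delta_{ij}$ is constant, so any covariant derivative of it must vanish.

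\textbf{Skew-symmetry of $\omega$.} First I would apply the Leibniz identity from the definition of a metric connection to the constant functions $g(S_i,S_j)=\delta_{ij}$. Since the left-hand side $\nabla_X(g(S_i,S_j))=X(\delta_{ij})=0$, I obtain
\begin{equation}\nonumber
0=g(\nabla_X S_i,S_j)+g(S_i,\nabla_X S_j).
\end{equation}
Next I would substitute the expansions $\nabla_X S_i=\sum_k\omega_{ki}(X)S_k$ and $\nabla_X S_j=\sum_k\omega_{kj}(X)S_k$, and use orthonormality $g(S_k,S_\ell)=\delta_{k\ell}$ to collapse the sums to $\omega_{ji}(X)+\omega_{ij}(X)=0$. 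Since this holds for every $X\in\mathfrak{X}(M)$, I conclude $\omega_{ij}=-\omega_{ji}$.

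\textbf{Skew-symmetry of $\Omega$.} The cleanest route uses the structure equation (\ref{EquaStructCartan}) from Theorem \ref{ThFormConnexFormCourbure}, namely $\Omega=d\omega+\omega\wedge\omega$. I would take the transpose entrywise: $(d\omega)^T_{ij}=d\omega_{ji}=-d\omega_{ij}$, so $(d\omega)^T=-d\omega$. For the quadratic term I would compute
\begin{equation}\nonumber
(\omega\wedge\omega)^T_{ij}=\sum_k\omega_{jk}\wedge\omega_{ki}=\sum_k(-\omega_{kj})\wedge(-\omega_{ik})=\sum_k\omega_{kj}\wedge\omega_{ik},
\end{equation}
and then use the anticommutativity of $1$-forms under the wedge product to rewrite this as $-\sum_k\omega_{ik}\wedge\omega_{kj}=-(\omega\wedge\omega)_{ij}$. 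Combining both pieces yields $\Omega^T=-\Omega$.

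\textbf{Main obstacle.} The genuinely computational step is verifying $(\omega\wedge\omega)^T=-\omega\wedge\omega$; it relies on two sign flips (one from the already-established skew-symmetry of $\omega$, one from the anticommutation of $1$-forms) which must be tracked carefully. Alternatively, I could bypass the structure equation and argue directly from the curvature definition: applying the Leibniz identity twice to $g(S_i,S_j)=\delta_{ij}$ with the operators $\nabla_X,\nabla_Y,\nabla_{[X,Y]}$, and combining, one obtains $g(\mathcal{R}(X,Y)S_i,S_j)+g(S_i,\mathcal{R}(X,Y)S_j)=0$, which by the orthonormal expansion gives $\Omega_{ji}(X,Y)+\Omega_{ij}(X,Y)=0$. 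Either route suffices, but the structure-equation route is shorter provided the sign bookkeeping is done correctly.
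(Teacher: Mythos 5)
Your proof is correct. The paper states Proposition \ref{PropMatriceConnexAntiSym} without any proof, so there is nothing to compare against; your argument is the standard one. Deriving $\omega_{ij}+\omega_{ji}=0$ by applying the metric-compatibility (Leibniz) identity to the constant functions $g(S_i,S_j)=\delta_{ij}$ is exactly what is needed, and both of your routes to the skew-symmetry of $\Omega$ work: the sign bookkeeping in $(\omega\wedge\omega)^{T}=-\,\omega\wedge\omega$ (one sign from the skew-symmetry of $\omega$ applied twice, one from anticommuting the two $1$-forms) checks out, as does the direct computation showing $g(\mathcal{R}(X,Y)S_i,S_j)+g(S_i,\mathcal{R}(X,Y)S_j)=0$.
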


\subsection{Tangent Bundle Case}

\subsubsection{Torsion of a Connexion on a Tangent Bundle}
Let us consider now, a local frame field $S=(S_{1},
\dots , S_{m})$ over $\mathcal{U}\subset M$ where $S_{i}\in \mathfrak{X}(\mathcal{U})$ are local tangent vectors fields (i.e. local sections of the tangent bundle such that for all $p\in \mathcal{U}$,
$\Big(S_{1}(p),\dots , S_{m}(p)\Big)$ forms a basis of the tangent vector space of  $M$).
\begin{definition}
If $S$ is a local orthonormal frame field, the associated  \linebreak coframe field $\eta=\Big(\eta_{1},\dots , \eta_{m}\Big)$ is a local frame field of 1-forms, such that for all $p\in \mathcal{U}$,
$\eta_{i}(p)\Big(S_{j}\Big)=\delta_{ij}.$
\end{definition}
We define then a differential 2-form $\Theta$ as follows:
\begin{equation}\label{1ereequaStructureCartan}
d\eta+\omega\wedge\eta = \Theta
\end{equation}

\begin{definition}
$\Theta$ is called  the torsion 2-form of $\nabla$.
\end{definition}

\begin{prop}\label{PropBianchi} On a tangent bundle, the four forms
$\eta, \omega , \Theta$ and $\Omega$ are connected by the folowing equations
\begin{equation}
d\Theta + \omega\wedge \Theta= \Omega\wedge \eta
\end{equation}
and
\begin{equation}\label{biachi1forme}
d\Omega = \Omega\wedge \omega - \omega\wedge \Omega
\end{equation}
\end{prop}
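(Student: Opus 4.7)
The plan is to derive both identities by differentiating Cartan's first and second structure equations, which are already available from \eqref{1ereequaStructureCartan} and \eqref{EquaStructCartan} (Theorem~\ref{ThFormConnexFormCourbure}), and then using $d^{2}=0$ together with the Leibniz rule for matrix/vector-valued forms.

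For the first identity, I would start from the first structure equation $d\eta+\omega\wedge\eta=\Theta$ and apply $d$ to both sides. Since $d^{2}\eta=0$, this gives
$d\Theta = d\omega\wedge\eta - \omega\wedge d\eta$,
where the minus sign comes from the fact that $\omega$ is a 1-form (matrix-valued). I would then substitute $d\omega = \Omega-\omega\wedge\omega$ from the second structure equation and $d\eta = \Theta-\omega\wedge\eta$ from the first. The two cubic terms in $\omega$ cancel (even though $\omega\wedge\omega\wedge\omega$ need not vanish for a matrix-valued 1-form, it appears on both sides with opposite signs), leaving exactly $d\Theta = \Omega\wedge\eta - \omega\wedge\Theta$, which is the desired first relation.

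For the second identity, the same strategy applies to the second structure equation: differentiate $\Omega = d\omega+\omega\wedge\omega$ to obtain $d\Omega = d\omega\wedge\omega - \omega\wedge d\omega$, again using $d^{2}\omega=0$ and the graded Leibniz rule $d(\omega\wedge\omega)=d\omega\wedge\omega-\omega\wedge d\omega$. Replacing each $d\omega$ by $\Omega-\omega\wedge\omega$, the cubic terms $\omega\wedge\omega\wedge\omega$ again cancel between the two contributions, producing $d\Omega = \Omega\wedge\omega - \omega\wedge\Omega$.

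The only real subtlety, and what I would flag as the main point to double-check, is sign-bookkeeping for matrix-valued (or vector-valued) differential forms: one must use the correct graded Leibniz rule $d(\alpha\wedge\beta)=d\alpha\wedge\beta+(-1)^{p}\alpha\wedge d\beta$ for $\alpha$ a $p$-form, and one must \emph{not} assume $\omega\wedge\omega=0$ or $\omega\wedge\omega\wedge\omega=0$, as would be true for scalar 1-forms; instead one verifies that these cubic terms cancel on the nose. No deeper geometric input is needed beyond Cartan's structure equations themselves, so both relations are purely formal consequences of Theorem~\ref{ThFormConnexFormCourbure} and equation~\eqref{1ereequaStructureCartan}.
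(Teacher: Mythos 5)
Your proof is correct and is the standard derivation: the paper states Proposition~\ref{PropBianchi} without proof, and your argument --- applying $d$ to the two structure equations, using $d^{2}=0$ and the graded Leibniz rule for matrix-valued forms, then back-substituting $d\omega=\Omega-\omega\wedge\omega$ and $d\eta=\Theta-\omega\wedge\eta$ --- is exactly what one would supply. One small slip of wording: in the first identity the cancelling terms are $\omega\wedge\omega\wedge\eta$, quadratic in $\omega$, not cubic (the cubic terms $\omega\wedge\omega\wedge\omega$ appear only in the second identity); the computation itself is right, and your emphasis on not assuming $\omega\wedge\omega=0$ for matrix-valued forms is exactly the point to be careful about.
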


The equation (\ref{biachi1forme}) is the expression of the Bianchi identity via the \linebreak  connection  1-form and the curvature 2-form. Equation (\ref{biachi1forme}) is also valid on \linebreak  a arbitrary vector bundle.

\subsubsection{Cartan's Structure Equations}

Let $(M,g)$ be  an $m$-dimensional Riemannian manifold. Let $\eta=(\eta_{1},\eta_{2},\dots , \eta_{m})$ an orthonormed coframe field on $M$ ($\eta_{j}\in
\mathcal{A}^{1}(M)$). According to equations 
(\ref{1ereequaStructureCartan}),  \linebreak (\ref{2ndequastructureCartan}) and the proposition \ref{PropMatriceConnexAntiSym}, we establish the Cartan structure equations:

\begin{equation}
\begin{cases}
\displaystyle{d\eta_{i}+\sum_{j=1}^{m}\omega_{ij}\wedge\eta_{j}}=0\quad \text{ (torsion-free) }\\
\displaystyle{d\omega_{ij}+\sum_{k=1}^{m}\omega_{ik}\wedge\omega_{kj}=\Omega_{ij}}
\end{cases}
\end{equation}

where the matrix $(\omega_{ij})$ is the L\'evi-Civita connection 1-form (free \linebreak  torsion connection which is compatible with the riemannian metric $g$).  \linebreak Because $\eta$ is an orthonormed coframe field, $(\omega_{ij})$ is skew-symmetric  \linebreak (proposition \ref{PropMatriceConnexAntiSym}.). $(\Omega_{ij})$ is
the curvature 2-form  matrix of the riemannian connection ($\displaystyle{\Omega_{ij}=\frac{1}{2}\sum_{k,l=1}^{m}\mathcal{R}_{ijkl}\eta_{k}\wedge\eta_{l}}$).

\subsection{The Cartan Lemma}

We end  this section with a technical lemma,
which is  easy to establish and at the same time rich applications . This lemma will not only be useful  \linebreak for isometric embedding problem, but also for many calculus in differential geometry.

\begin{lemma}\label{LemmedeCartan}
Let $M$ be an  $m$-dimensional manifold.
$\omega_{1},\omega_{2},\dots,\omega_{r}$ a set of linearly independent differential  $1-$forms
 ($r\leq n$) and
$\theta_{1},\theta_{2},\dots , \theta_{r}$ differential  $1-$forms
such that
\begin{equation}
\sum_{i=1}^{r}\theta_{i}\wedge\omega_{i}=0
\end{equation}
then there exists $r^{2}$  functions $h_{ij}$ in $\mathcal{C}^{1}(M)$ such that
\begin{equation}
\theta_{i}=\sum_{j=1}^{r}h_{ij}\omega_{j}\quad \text{ with }
h_{ij}=h_{ji}.
\end{equation}
\end{lemma}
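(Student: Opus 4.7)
The plan is to reduce the problem to pointwise linear algebra on $\Lambda^2 T_p^*M$ by completing the given family to a local coframe. Since linear independence is an open condition, in a neighborhood of any point I can choose 1-forms $\omega_{r+1},\dots,\omega_m$ on $M$ so that $(\omega_1,\dots,\omega_m)$ is a basis of $T_p^*M$ at every point of that neighborhood. Then each $\theta_i$ has a unique expansion
\[
\theta_i \;=\; \sum_{j=1}^{m} h_{ij}\,\omega_j
\]
with smooth coefficients $h_{ij}$ (smoothness follows from the Cramer-type formulas giving $h_{ij}$ in terms of the local coordinate components of $\theta_i$ and $\omega_j$). The lemma will then follow by showing that $h_{ij}=0$ whenever $j>r$ and that $h_{ij}=h_{ji}$ for $i,j\leq r$.

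Next I would substitute this expansion into the hypothesis and use that the 2-forms $\{\omega_k\wedge\omega_l\}_{k<l}$ form a local basis of $\Lambda^2 T_p^*M$. Substitution gives
\[
0 \;=\; \sum_{i=1}^{r}\theta_i\wedge\omega_i \;=\; \sum_{i=1}^{r}\sum_{j=1}^{m} h_{ij}\,\omega_j\wedge\omega_i,
\]
which I would split into the block $j>r$ and the block $j\leq r$. In the first block the 2-forms $\omega_j\wedge\omega_i$ with $i\leq r < j$ are distinct members of the chosen basis of $\Lambda^2 T_p^*M$, so each coefficient $h_{ij}$ must vanish; this is exactly the assertion $\theta_i\in\mathrm{span}(\omega_1,\dots,\omega_r)$. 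Regrouping the second block by pairs $i<j\leq r$ and using antisymmetry of $\wedge$, the remaining relation becomes
\[
\sum_{1\leq i<j\leq r}(h_{ji}-h_{ij})\,\omega_i\wedge\omega_j \;=\; 0,
\]
and independence of these basis 2-forms forces $h_{ij}=h_{ji}$.

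There is no genuine obstacle here; the only care needed is the completion of $\omega_1,\dots,\omega_r$ to a local coframe and the verification that the resulting $h_{ij}$ are smooth, both of which are immediate in a sufficiently small open set. The essence of the argument is simply that once one works in a full local basis of 1-forms, the wedge relation $\sum \theta_i\wedge\omega_i=0$ pins down both the vanishing of the cross-coefficients and the symmetry of the remaining $r\times r$ block, which is precisely the content of the lemma.
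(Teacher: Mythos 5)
Your proof is correct and is the standard argument for Cartan's lemma: complete $\omega_{1},\dots,\omega_{r}$ to a local coframe, expand each $\theta_{i}$ in it, and compare coefficients against the basis 2-forms $\omega_{k}\wedge\omega_{l}$ to kill the coefficients with $j>r$ and to force the symmetry $h_{ij}=h_{ji}$. The paper states this lemma without any proof (calling it ``easy to establish''), so there is no divergence to report; yours is exactly the intended argument, the only point worth adding being that the locally defined $h_{ij}$ glue to global functions on $M$ because the expansion coefficients with respect to a pointwise linearly independent family are unique.
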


\section{Exterior Differential Systems and Ideals}

\subsection{Exterior Differential Systems}

Denote $\mathcal{A}(M)$ the space of smooth differential forms\footnote{This is a graded algebra under the wedge product.} on $M$.

\begin{definition}

An exterior differential system is a finite set of differential
forms $I=\{\omega_{1},\omega_{2},\dots , \omega_{k}\}\subset
\mathcal{A}(M)$ for which there is a set of
equations $\{\omega_{i}=0 | \omega_{i}\in I \}$. \\
such that one can write the exterior differential system as follow:
\begin{equation}\nonumber
\begin{cases}
\omega_{1}=0\\
\omega_{2}=0\\
\vdots\\
\omega_{k}=0
\end{cases}
\end{equation}
\end{definition}

\begin{definition}
An exterior differential system $I \subset \mathcal{A}(M)$ is said to be 
Pffafian if  $I$ contains only differential 1-forms, i.e. $I
\subset\mathcal{A}^{1}(M)$.
\end{definition}

\subsection{Exterior Ideals}
\begin{definition}
Let $\mathcal{I} \subset \mathcal{A}(M)$ a set of differentiable
forms. $\mathcal{I}$ is an exterior ideal if:
\begin{enumerate}
\item The exterior product of any differential form of
$\mathcal{I}$ by a differential form of $\mathcal{A}(M)$ belong to
$\mathcal{I}$. \item The sum of any two differential forms of the
same degree belonging to $\mathcal{I}$, \linebreak belong also to $\mathcal{I}$.
\end{enumerate}
\end{definition}

\begin{definition}
Let $I\subset\mathcal{A}(M)$ an exterior differential system. The
exterior ideal generated by $I$ is the smallest exterior ideal
containing $I$.
\end{definition}

\subsection{Exterior Differential Ideals}
\begin{definition}
Let $\mathcal{I}\subset \mathcal{A}(M)$ a set of differential
forms. $\mathcal{I}$ is an \linebreak  exterior differential ideal if
$\mathcal{I}$ is an exterior ideal closed under the exterior
differentiation, i.e. $\forall \omega \in \mathcal{I}, d\omega \in
\mathcal{I}$ (we can also write $d\mathcal{I}\subset
\mathcal{I}$).
\end{definition}

\begin{definition}
Let $I\subset\mathcal{A}(M)$ an exterior differential system. The
exterior \linebreak  differential ideal generated by $I$ is the smallest
exterior differential ideal \linebreak containing $I$.
\end{definition}

\subsection{Closed Exterior Differential Systems}
\begin{definition}
An exterior differential system $I\subset\mathcal{A}(M)$ is said
closed if the exterior differentiation of any form of $I$, belong to
the exterior ideal generated by $I$.
\end{definition}

\begin{prop}\label{PropIuniondIestferme}
An exterior differential system  $I$ is closed if and only if the
exterior differential ideal generated by $I$ is equal to the
exterior ideal generated by $I$. In particular,  $I\cup dI$ is
closed .
\end{prop}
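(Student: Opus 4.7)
The plan is to introduce shorthand $\mathcal{E}(I)$ for the exterior ideal generated by $I$ and $\mathcal{D}(I)$ for the exterior differential ideal generated by $I$. The inclusion $\mathcal{E}(I) \subseteq \mathcal{D}(I)$ is automatic, since $\mathcal{D}(I)$ is in particular an exterior ideal containing $I$ and $\mathcal{E}(I)$ is the smallest such. So the content of the proposition is the reverse inclusion and its equivalence with closedness.

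The implication $(\Leftarrow)$ is immediate from the definitions: if $\mathcal{E}(I) = \mathcal{D}(I)$, then for any $\omega \in I$ we get $d\omega \in \mathcal{D}(I) = \mathcal{E}(I)$, which is exactly the condition that $I$ be closed.

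For $(\Rightarrow)$, the main step is to prove that when $I$ is closed, $\mathcal{E}(I)$ is already stable under $d$; by minimality of $\mathcal{D}(I)$ this forces $\mathcal{D}(I) \subseteq \mathcal{E}(I)$, giving the desired equality. To verify $d$-stability I would write a typical element of $\mathcal{E}(I)$ as $\beta = \sum_k \alpha_k \wedge \omega_{i_k}$ with $\omega_{i_k} \in I$, $\alpha_k \in \mathcal{A}(M)$ homogeneous of degree $p_k$, and apply the graded Leibniz rule:
$$d\beta \;=\; \sum_k \Big( d\alpha_k \wedge \omega_{i_k} \;+\; (-1)^{p_k}\, \alpha_k \wedge d\omega_{i_k} \Big).$$
The terms $d\alpha_k \wedge \omega_{i_k}$ lie in $\mathcal{E}(I)$ by definition, and closedness of $I$ gives $d\omega_{i_k} \in \mathcal{E}(I)$, after which wedging with $\alpha_k$ keeps it inside $\mathcal{E}(I)$ by the first ideal axiom. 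Hence $d\beta \in \mathcal{E}(I)$.

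For the final claim that $I \cup dI$ is closed, it suffices to check the closedness condition on generators. If $\omega \in I$, then $d\omega \in dI \subseteq I \cup dI \subseteq \mathcal{E}(I \cup dI)$; if $\omega = d\tau \in dI$, then $d\omega = d^2\tau = 0 \in \mathcal{E}(I \cup dI)$. The only real care needed throughout is the sign-bookkeeping in the graded Leibniz rule and keeping straight the distinction between the exterior ideal and the exterior differential ideal; there is no genuine analytic obstacle.
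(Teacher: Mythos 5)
Your proof is correct, and the key step is exactly the right one: showing that closedness of $I$ makes the exterior ideal $\mathcal{E}(I)$ stable under $d$ via the graded Leibniz rule, so that minimality forces $\mathcal{D}(I)=\mathcal{E}(I)$. The paper states this proposition without proof, so there is no argument to compare against; your write-up is the standard one and fills that gap correctly, including the easy verification that $I\cup dI$ is closed.
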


\subsection{Solutions of an Exterior Differential System}
\begin{definition}
Let $I\subset\mathcal{A}(M)$ be an exterior differential system and
 $N$ a sub-manifold of $M$. $N$ is an integral manifold of  $I$ if  $i^{\ast}\omega=0, \forall \omega\in
I$, where $i$ is an embedding $i:N\longrightarrow M$.
\end{definition}

\section{Introduction to Cartan-K\"ahler Theory}

We consider in this section, an $m$-dimensional real manifold $M$ and $\mathcal{I}\subset \mathcal{A}(M)$\linebreak 
an exterior differential ideal on $M$.

\subsection{Integral Elements}
\begin{definition}Let $z\in M$ and $E\subset T_{z}M$ a linear subspace of
$T_{z}M$. $E$ is an integral element of $\mathcal{I}$ if
$\varphi_{E}=0$ for all $\varphi \in \mathcal{I}$. We denote by
$\mathcal{V}_{p}(\mathcal{I})$ the set of $p$-dimensional integral
elements of $\mathcal{I}$.
\end{definition}

\begin{definition}
$N$ is an integral manifold of $\mathcal{I}$ if and only if each
tangent space of  $N$ is an integral element of $\mathcal{I}$.
\end{definition}

\begin{prop} If $E$ is a $p$-dimensional integral element of $\mathcal{I}$,
 then every subspace of  $E$ are also  integral elements of  $\mathcal{I}$.
\end{prop}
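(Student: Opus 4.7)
The plan is to reduce everything to the defining property of $E$ and observe that vanishing of a multilinear form on a linear space is automatically inherited by subspaces. First I would fix an arbitrary subspace $F \subset E$ and an arbitrary element $\varphi \in \mathcal{I}$ of some degree $k$. By hypothesis $E \in \mathcal{V}_p(\mathcal{I})$, which means $\varphi_E = 0$; unpacking this, it says $\varphi_z(v_1,\ldots,v_k)=0$ for every $k$-tuple $(v_1,\ldots,v_k)$ of vectors in $E$, where $z\in M$ is the common base point.

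Next I would pick an arbitrary $k$-tuple $(w_1,\ldots,w_k)$ of vectors of $F$. Since $F\subset E$, each $w_i$ belongs to $E$ as well, so the vanishing above applies and gives $\varphi_z(w_1,\ldots,w_k)=0$. This is exactly the statement $\varphi_F = 0$. As $\varphi \in \mathcal{I}$ was arbitrary, the definition of an integral element is satisfied for $F$.

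One minor bookkeeping point: if $\dim F < k$, then any $k$ vectors in $F$ are linearly dependent and the alternating property of $\varphi$ already forces $\varphi_z(w_1,\ldots,w_k)=0$ regardless of the integrality hypothesis, so the argument trivially covers this degenerate range as well. There is no genuine obstacle here: the proof is purely a matter of tracing through the definition, and neither the ideal structure of $\mathcal{I}$ nor its closure under exterior differentiation plays any role. In categorical terms, the restriction map $\Lambda^k E^{\ast} \to \Lambda^k F^{\ast}$ factors the restriction $\Lambda^k T_z^{\ast}M \to \Lambda^k F^{\ast}$, so vanishing at the intermediate stage forces vanishing at the end.
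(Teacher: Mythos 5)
Your proof is correct and is the standard (essentially the only) argument: vanishing of the restriction $\varphi_E$ is inherited by every subspace $F\subset E$ because restriction to $F$ factors through restriction to $E$. The paper states this proposition without proof precisely because the argument is this immediate unwinding of the definition, so your write-up matches the intended reasoning.
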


We denote by $\mathcal{I}_{p}=\mathcal{I}\cap\mathcal{A}^{p}(M)$ the set of differential $p$-forms of
$\mathcal{I}$.

\begin{prop}\label{propVp}
$\mathcal{V}_{p}(\mathcal{I})=\{E\in G_{p}(TM) | \varphi_{E}=0
\text{ for all } \varphi \in \mathcal{I}_{p} \}$
\end{prop}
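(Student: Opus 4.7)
The plan is to prove the two inclusions separately. The forward inclusion $\mathcal{V}_{p}(\mathcal{I}) \subseteq \{E \in G_p(TM) \mid \varphi_E = 0 \text{ for all } \varphi \in \mathcal{I}_p\}$ is immediate from the definition of an integral element, since $\mathcal{I}_p \subset \mathcal{I}$. All the real content is in the reverse inclusion, which says that in order to test whether a $p$-plane $E$ is an integral element of $\mathcal{I}$, it suffices to check the forms of $\mathcal{I}$ of degree exactly $p$.

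So fix $E \in G_p(T_zM)$ with $\varphi_E = 0$ for every $\varphi \in \mathcal{I}_p$, and let $\psi \in \mathcal{I}$ be any homogeneous form of degree $k$; I must show $\psi_E = 0$. When $k > p$, this is automatic, since every $k$-form restricts trivially to a $p$-dimensional space. For $k \le p$ my plan is to choose an adapted coframe: pick a basis $(v_1,\dots,v_p)$ of $E$, extend it to a basis of $T_zM$, and let $(\omega^1,\dots,\omega^m)$ be the dual basis at $z$, extended smoothly to a neighbourhood. Then $\omega^{p+1}|_E = \cdots = \omega^m|_E = 0$, and expanding $\psi = \sum_{|I|=k} c_I \,\omega^I$ in multi-index notation reduces the problem to showing that $c_I = 0$ for every $k$-subset $I \subseteq \{1,\dots,p\}$.

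The key step is to exploit the ideal property to manufacture a $p$-form that isolates each such coefficient. For a fixed $I \subseteq \{1,\dots,p\}$ with $|I|=k$, set $J = \{1,\dots,p\} \setminus I$ and consider $\omega^J \wedge \psi$, where $\omega^J = \omega^{j_1} \wedge \cdots \wedge \omega^{j_{p-k}}$. Since $\mathcal{I}$ is an exterior ideal, $\omega^J \wedge \psi \in \mathcal{I}$, and being of degree $p$ it belongs to $\mathcal{I}_p$; hence $(\omega^J \wedge \psi)_E = 0$ by hypothesis. On the other hand, in the expansion of $\omega^J \wedge \psi$ every summand $\omega^J \wedge \omega^{I'}$ vanishes either because $J \cap I' \neq \emptyset$ or because $I' \not\subseteq \{1,\dots,p\}$ (so it contains some $\omega^\ell$ with $\ell > p$ that restricts to zero on $E$); only the term $I' = I$ survives and gives $\pm c_I\, \omega^1 \wedge \cdots \wedge \omega^p|_E$. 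Since $\omega^1 \wedge \cdots \wedge \omega^p|_E \ne 0$, this forces $c_I = 0$. Ranging over all $k$-subsets $I$ of $\{1,\dots,p\}$ yields $\psi_E = 0$, completing the proof.

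The substantive step is the wedge-product argument in the last paragraph; the main obstacle is purely combinatorial, namely keeping track of the multi-indices so that $\omega^J \wedge \psi$ really does single out the coefficient $c_I$. Once the adapted coframe is in place, the argument is otherwise a direct application of the ideal axiom already recorded in the definition of exterior ideal.
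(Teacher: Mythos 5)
Your proof is correct and complete: the forward inclusion is indeed immediate from the definition of an integral element, and the reverse inclusion is handled properly by the wedge-product argument, since $\omega^{J}\wedge\psi$ lies in $\mathcal{I}_{p}$ by the ideal axiom and its restriction to $E$ reduces to $\pm c_{I}\,\omega^{1}\wedge\cdots\wedge\omega^{p}|_{E}$, forcing $c_{I}=0$. The paper states this proposition without any proof, so there is nothing to compare against; your argument is the standard one (essentially that of Bryant--Chern--Gardner--Goldschmidt--Griffiths) and supplies the missing justification. The only implicit convention is that it suffices to treat homogeneous $\psi\in\mathcal{I}$, i.e.\ that the exterior ideal is graded; this is standard and consistent with the paper's usage, but it is worth stating explicitly since the paper's definition of an exterior ideal does not literally require it.
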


\begin{definition}
Let $E$  an integral element of $\mathcal{I}$. Let
$\{e_{1},e_{2},\dots ,e_{p}\}$ a basis of  $E\subset T_{z}M$. The
polar space of $E$, denoted by $H(E)$, is the vector space defined
as follow:
\begin{equation}
H(E)=\{v\in T_{z}M | \varphi(v, e_{1},e_{2},\dots , e_{p})=0
\text{ for all } \varphi \in \mathcal{I}_{p+1}\}.
\end{equation}
\end{definition}

Notice that $E\subset H(E)$. This implies that a
differential form is alternate. The polar space plays an
important role in exterior differential system theory as we shall
see in the following proposition.

\begin{prop}
Let $E \in \mathcal{V}_{p}(\mathcal{I})$ be an $p$-dimensional
integral element of
 $\mathcal{I}$. \linebreak A $(p+1)$-dimensional vector space $E^{+}\subset T_{z}M$ which contains $E$
 is an integral element of  $\mathcal{I}$ if and only if $E^{+}\subset H(E)$.
\end{prop}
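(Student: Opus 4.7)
The plan is to reduce the statement to a basis-level check using the alternation of differential forms, and to exploit Proposition \ref{propVp} so that only $(p+1)$-forms in $\mathcal{I}$ need to be tested.

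First I would pick a vector $v \in E^+ \setminus E$ and complete a chosen basis $\{e_1,\dots,e_p\}$ of $E$ to a basis $\{e_1,\dots,e_p,v\}$ of $E^+$. By Proposition \ref{propVp}, $E^+ \in \mathcal{V}_{p+1}(\mathcal{I})$ is equivalent to $\varphi_{E^+}=0$ for every $\varphi\in\mathcal{I}_{p+1}$. Since $\varphi$ is an alternating $(p+1)$-form and $\{e_1,\dots,e_p,v\}$ is a basis of $E^+$, the restriction $\varphi_{E^+}$ vanishes identically on $E^+$ if and only if
\begin{equation}\nonumber
\varphi(e_1,\dots,e_p,v)=0 \quad \text{for all } \varphi\in \mathcal{I}_{p+1}.
\end{equation}
Up to a sign from a permutation of arguments, this is exactly the condition $v\in H(E)$.

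For the ``only if'' direction, suppose $E^+$ is an integral element. For any $w\in E^+$, decompose $w=\alpha v+\sum_i \beta_i e_i$. Using multilinearity and the fact that $\varphi(e_i,e_1,\dots,e_p)=0$ by alternation, we get $\varphi(w,e_1,\dots,e_p)=\alpha\,\varphi(v,e_1,\dots,e_p)$, and the latter vanishes by the basis-level reduction above. Hence $w\in H(E)$, so $E^+\subset H(E)$. For the ``if'' direction, assume $E^+\subset H(E)$. In particular $v\in H(E)$, so $\varphi(v,e_1,\dots,e_p)=0$ for every $\varphi\in \mathcal{I}_{p+1}$; by the reduction to basis evaluation, $\varphi_{E^+}=0$, and Proposition \ref{propVp} gives $E^+\in\mathcal{V}_{p+1}(\mathcal{I})$.

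There is really no deep obstacle: the proof is a bookkeeping exercise once one invokes (i) Proposition \ref{propVp} to restrict to $(p+1)$-forms, (ii) alternation to replace the restriction $\varphi_{E^+}$ by a single evaluation on a basis, and (iii) the already-noted inclusion $E\subset H(E)$ so that extending $E$ by one vector of $H(E)$ automatically keeps the whole space in $H(E)$. The only point that requires mild care is ensuring that one may always choose the basis of $E^+$ in the form $\{e_1,\dots,e_p,v\}$ with $v\notin E$, which is clear because $\dim E^+=\dim E+1$.
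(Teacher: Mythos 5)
Your proof is correct. The paper states this proposition without giving any proof, and your argument --- reducing to $(p+1)$-forms via Proposition \ref{propVp}, using alternation to test $\varphi_{E^{+}}$ on the single basis $\{e_{1},\dots ,e_{p},v\}$, and invoking the automatic inclusion $E\subset H(E)$ so that $E^{+}\subset H(E)$ is equivalent to $v\in H(E)$ --- is exactly the standard one and has no gaps.
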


In order to check if a given $p$-dimensional integral element of an
exterior differential ideal $\mathcal{I}$ is contained in a
$(p+1)$-dimensional integral element of  $\mathcal{I}$ , we
introduce the following function $r:\mathcal{V}_{p}(\mathcal{I})  \longrightarrow
\mathbb{Z}$, $r(E)=dim H(E)-(p+1)$ is  a relative integer, $\forall E\in \mathcal{V}_p(\mathcal{I})$. \\

Notice that $r(E)\geq 1$. If $r(E)=-1$, then  $E$ is contained in
any $(p+1)$-dimensional integral element of $\mathcal{I}$.

\subsubsection{K\"ahler-Ordinary and
K\"ahler-Regular Integral Elements}

Let $\Delta$ a differential  $n$-form on a $m$-dimensional manifold $M$. Let\footnote{$G_{n}(TM)$  is the Grassmanian of $TM$, i.e. the set
of $n$-dimensional subspace of $TM$.} \linebreak
$G_{n}(TM,\Delta)=\{E\in G_{n}(TM) / \Delta_{E}\neq
0\}$. We denote by
$\mathcal{V}_{n}(\mathcal{I},\Delta)=\mathcal{V}_{n}(\mathcal{I})\cap
G_{n}(TM,\Delta)$ the set of integral elements of  $\mathcal{I}$
on which  $\Delta_{E}\neq 0$.

\begin{definition}
An integral element  $E\in \mathcal{V}_{n}(\mathcal{I})$ is called
 K\"ahler-ordinary if \linebreak there exists a differential $n$-form
 $\Delta$ such that $\Delta_{E}\neq0$. Moreover, if the function $r$ is locally constant in some neighborhood of  $E$,
then $E$ is said  K\"ahler-regular. \linebreak 
\end{definition}

\subsubsection{Integral Flags, Ordinary and Regular Integral Elements}
\begin{definition}
An integral flag of  $\mathcal{I}$ on $z\in M$ of length $n$ is a
sequence of integral elements $E_{k}$ of $\mathcal{I}$: $(0)_{z}\subset E_{1}\subset E_{2}\subset \dots \subset
E_{n}\subset T_{z}M$.
\end{definition}

\begin{definition}
Let $I$ be an exterior differential system on $M$. An integral element
 $E\in \mathcal{V}(I)$ is said ordinary
if its base point $z\in M$ is an ordinary zero of $I_{0}=I\cap
\mathcal{A}^{0}(M)$ and if there exists an integral flag
$(0)_{z}\subset E_{1}\subset E_{2}\subset \dots \subset E_{n}=E
\subset T_{z}M$ where the $E_{k}$, $k=1,\dots ,(n-1)$ are
K\"ahler-regular integral elements. Moreover, if  $E$ is
K\"ahler-regular, then  $E$ is said  regular.
\end{definition}

\subsection{Cartan's Test}

\begin{theorem}\label{TestCartan}(Cartan's test)\\
Let  $\mathcal{I}\subset \mathcal{A}^{\ast}(M)$ be  an exterior ideal
which does not contain  0-forms (functions on $M$). Let  $(0)_{z}\subset E_{1}\subset
E_{2}\subset \dots \subset E_{n}\subset T_{z}M$ be an integral flag
of $\mathcal{I}$. For any $k<n$, we denote by $c_{k}$ the
codimension of the polar space $H(E_{k})$ in $T_{z}M$. Then
$\mathcal{V}_{n}(\mathcal{I})\subset G_{n}(TM)$ is at least of
$c_{0}+c_{1}+\dots + c_{n-1}$ codimension at $E_{n}$.\\
Moreover, $E_{n}$ is an ordinary integral flag if and only if
$E_{n}$ has a neighborhood $U$ \linebreak  in $G_{n}(TM)$ such that
$\mathcal{V}_{n}(\mathcal{I})\cap U$ is a manifold of
$c_{0}+c_{1}+\dots + c_{n-1}$ codimension in  $U$.
\end{theorem}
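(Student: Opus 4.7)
My plan is to build an incidence correspondence between integral elements and integral flags, and exploit the fact that the polar-space proposition turns each step ``$E_k \leadsto E_{k+1}$ integral'' into a purely linear condition, which can then be tracked through a tower of projective bundles.

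First I would introduce the flag bundle $F_n(TM) \to M$ whose fibre at $z$ parametrises complete flags $E_1 \subset \cdots \subset E_n$ with $\dim E_k = k$, together with the subset $\mathcal{V}^{\mathrm{fl}}_n(\mathcal{I}) \subset F_n(TM)$ of integral flags. The forgetful tower $F_n \to F_{n-1} \to \cdots \to F_1 \to M$ is an iterated projective bundle: the fibre of $F_{k+1} \to F_k$ over a flag terminating in $E_k$ is $\mathbb{P}(T_zM/E_k)$, of dimension $m - k - 1$. By the polar-space proposition, an extension $E_{k+1} \supset E_k$ is integral iff $E_{k+1} \subset H(E_k)$. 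Thus the fibre of $\mathcal{V}^{\mathrm{fl}}_{k+1}(\mathcal{I}) \to \mathcal{V}^{\mathrm{fl}}_k(\mathcal{I})$ over such a flag is an open subset of $\mathbb{P}(H(E_k)/E_k)$, a projective subspace of codimension exactly $c_k$ inside $\mathbb{P}(T_zM/E_k)$.

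Next I would invoke upper semi-continuity of $\dim H(E)$: the polar condition $\varphi(v,e_1,\dots,e_k)=0$ for $\varphi\in \mathcal{I}_{k+1}$ is a linear system in $v$ whose coefficients depend continuously on $E$, so its rank is lower semi-continuous and $c_k$ can only \emph{increase} under small perturbations of the flag. Iterating the fibrewise codimension estimate up the tower then shows that $\mathcal{V}^{\mathrm{fl}}_n(\mathcal{I})$ has codimension at least $c_0 + c_1 + \cdots + c_{n-1}$ in $F_n(TM)$ at our distinguished flag. Pushing down via the projection $F_n(TM) \to G_n(TM)$ preserves codimensions, because this map is a proper submersion with constant fibre dimension $\binom{n}{2}$ (the complete-flag variety of an $n$-space), and by the subspace property of integral elements every flag inside an integral $E_n$ is itself integral, so the restricted map $\mathcal{V}^{\mathrm{fl}}_n(\mathcal{I}) \to \mathcal{V}_n(\mathcal{I})$ has the same fibre dimension. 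This yields the asserted codimension lower bound.

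For the equivalence, if $E_n$ is an ordinary integral flag then each intermediate $E_k$ is K\"ahler-regular, so $\dim H(E_k)$, hence $c_k$, is locally constant near $E_k$ in $\mathcal{V}^{\mathrm{fl}}_k(\mathcal{I})$. Under locally constant rank the polar equations cut out a smooth subbundle of $F_{k+1} \to F_k$, and a constant-rank (implicit function) argument promotes $\mathcal{V}^{\mathrm{fl}}_{k+1} \hookrightarrow F_{k+1}$ to a smooth submanifold of the expected codimension over $\mathcal{V}^{\mathrm{fl}}_k$; induction, starting from the K\"ahler-ordinary hypothesis at the top, yields a manifold structure of total codimension $\sum c_k$. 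Descending along the projection with constant fibre dimension $\binom{n}{2}$ then furnishes a neighborhood $U$ of $E_n$ in $G_n(TM)$ on which $\mathcal{V}_n(\mathcal{I})\cap U$ is a manifold of the stated codimension. Conversely, if $\mathcal{V}_n(\mathcal{I})$ attains exactly this codimension near $E_n$, then the semi-continuous inequality at each stage must be an equality on a neighborhood, forcing each $c_k$ to be locally constant and each $E_k$ K\"ahler-regular. The main obstacle will be the implicit function step: one must verify that, under locally constant rank, the pulled-back equations defining $\mathcal{V}^{\mathrm{fl}}_n(\mathcal{I})$ inside $F_n(TM)$ actually carve out a submanifold of the correct codimension and not merely one of codimension at least $\sum c_k$. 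This requires choosing adapted local coordinates on the flag bundle in which the polar equations become coordinate functions up to a nonsingular linear change, so that the constant-rank theorem can be applied inductively stage by stage up the tower.
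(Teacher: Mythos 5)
The paper itself offers no proof of this theorem: it simply refers to \cite{ExtDiffSys-Book}, p.~74, where the argument is carried out directly in $G_{n}(TM)$ using graph coordinates $p^{a}_{i}$ for $n$-planes near $E_{n}$ and the expansion of Proposition \ref{PropCpdeHCp} to produce $c_{0}+\dots+c_{n-1}$ equations cutting out $\mathcal{V}_{n}(\mathcal{I})$ with linearly independent differentials at $E_{n}$. Your flag-bundle reformulation is a legitimate way to organize the same linear algebra, and your identification of $\pi^{-1}(\mathcal{V}_{n}(\mathcal{I}))$ with the integral flags (via the subspace property) is correct. But as written the argument has a genuine gap precisely at the point you defer to the end: the stage-by-stage fibrewise count only yields an \emph{expected} codimension. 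To obtain the actual lower bound (containment of $\mathcal{V}_{n}(\mathcal{I})$, near $E_{n}$, in the zero set of $\sum c_{k}$ functions with independent differentials) you must prove that the polar equations accumulated over all stages remain independent as functions on the total space; this is the entire analytic content of the theorem, and it is exactly what Proposition \ref{PropCpdeHCp} supplies, since the forms $\pi^{J}_{\rho}$ with $\sup J\leq p$ act on the distinct block of coordinates $p^{a}_{p+1}$, so that contributions from different $p$ involve disjoint sets of variables. A second, related gap is the descent from $F_{n}(TM)$ to $G_{n}(TM)$: even granting the bound upstairs, restricting the defining functions to a local section of the flag bundle can destroy the independence of their differentials (a nontrivial combination may be conormal to the section), so ``pushing down preserves codimension'' needs justification; the standard proof avoids this by never leaving the Grassmannian.

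There is also a subtlety in your converse direction of the ``moreover'' clause. Your semicontinuity argument shows that $c_{k}$ is locally constant along flags subordinate to $n$-dimensional integral elements near $E_{n}$, but K\"ahler-regularity of $E_{k}$ requires $r$ (equivalently $\dim H$) to be locally constant on a full neighborhood of $E_{k}$ in $\mathcal{V}_{k}(\mathcal{I})$, including $k$-dimensional integral elements that are not contained in any integral $n$-plane close to $E_{n}$. Bridging that difference requires an additional argument (it is handled carefully in \cite{ExtDiffSys-Book}). In short: the architecture of your proof is sound and close in spirit to the standard one, but the independent-differentials computation and the descent step are not optional refinements --- they are the theorem.
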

\begin{proof}
See \cite{ExtDiffSys-Book}, page 74.
\end{proof}

\begin{prop}\label{PropCpdeHCp} Let $\mathcal{I}\cap \mathcal{A}^{\ast}(M)$
an exterior ideal which do not contains 0-forms. Let $E\in
\mathcal{V}_{n}(\mathcal{I})$ be an integral element of $\mathcal{I}$
at the point $z\in M$. Let $\omega_{1},\omega_{2},\dots ,
\omega_{n},\pi_{1},\pi_{2}, \dots , \pi_{s}$ (where $s=$dim$M -
n$) be a coframe in a open neighborhood of  $z\in M$ such that $E=\{v
\in T_{z}M / \pi_{a}(v)=0 \text{ for all } a=1,\dots s\}$. For
all  $p\leq n$, we define $E_{p}=\{v\in E | \omega_{k}(v)=0 \text{
for all } k > p \}$. Let $\{\varphi_{1}, \varphi_{2},\dots
,\varphi_{r}\}$ be the set of differential forms which generate the
exterior ideal  $\mathcal{I}$, where $\varphi_{\rho}$ is of $(d_{\rho}+1)$ degree.\\
For all $\rho$, there exists an expansion
\begin{equation}
\varphi_{\rho}=\sum_{|J|=d_{\rho}}\pi_{\rho}^{J}\wedge\omega_{J}+\tilde{\varphi}_{\rho}
\end{equation}
where the   1-forms $\pi_{\rho}^{J}$ are linear combinations of
the forms $\pi$ and the terms $\tilde{\varphi}_{\rho}$ are, either
of degree  2 or more on  $\pi$, or vanish at $z$.\\
moreover, we have
\begin{equation}
H(E_{p})=\{v\in T_{z}M | \pi_{\rho}^{J}(v)=0 \text{ for all } \rho
\text{ and } \sup J\leq p\}
\end{equation}
In particular, for the integral flag $(0)_{z}\subset E_{1}\subset
E_{2}\subset\dots \subset E_{n}\cap T_{z}M$ de $\mathcal{I}$, \linebreak 
$c_{p}$ correspond to the number of linear independent  forms \linebreak 
$\{\pi_{\rho}^{J}|_{z} \text{ such that } \sup J \leq p\}$ .
\end{prop}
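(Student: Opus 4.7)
The plan is to expand each generator $\varphi_\rho$ in the chosen coframe, reduce the polar conditions on $E_p$ to evaluations on these generators alone, and then read off the forms $\pi_\rho^J$ directly.

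First I would expand $\varphi_\rho$ in the basis of wedge products of the $\omega_i$'s and $\pi_a$'s, sorting terms by their total $\pi$-degree. Since $E$ is integral and annihilates the $\pi_a$, the pure $\omega$-part of $\varphi_\rho$ coincides with $\varphi_\rho|_E$ at $z$ and therefore vanishes there, so it can be absorbed into $\tilde\varphi_\rho$. The single-$\pi$ part then rewrites as $\sum_{|J|=d_\rho}\pi_\rho^J\wedge\omega_J$, where $\pi_\rho^J$ collects the coefficient of $\omega_J$ and is a linear combination of the $\pi_a$. All remaining terms have $\pi$-degree $\geq 2$ and also enter $\tilde\varphi_\rho$, producing the expansion announced in the proposition.

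Next I would describe $H(E_p)$ in terms of the generators. Take the basis $e_1,\dots,e_p$ of $E_p$ with $\pi_a(e_j)=0$ and $\omega_i(e_j)=\delta_{ij}$ for $i\leq p$, and $\omega_i(e_j)=0$ for $i>p$. Since $\mathcal{I}$ is generated as an exterior ideal by the $\varphi_\rho$, any $\varphi\in\mathcal{I}_{p+1}$ is of the form $\sum_\rho \beta_\rho\wedge\varphi_\rho$ with $\deg\beta_\rho=p-d_\rho$. Expanding $(\beta_\rho\wedge\varphi_\rho)(v,e_1,\dots,e_p)$ by the shuffle formula, the summands in which $v$ feeds $\beta_\rho$ vanish (as $\varphi_\rho$ is then evaluated on vectors of $E$ and $\varphi_\rho|_E=0$), and the surviving summands are scalar combinations of $\varphi_\rho(v,e_{j_1},\dots,e_{j_{d_\rho}})$ with coefficients $\beta_\rho(e_K)$, $K=\{1,\dots,p\}\setminus J$. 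Choosing $\beta_\rho=\omega_K$ for a prescribed $K$ isolates a single such evaluation, so $H(E_p)$ is cut out exactly by the conditions $\varphi_\rho(v,e_{j_1},\dots,e_{j_{d_\rho}})=0$ with $\rho$ arbitrary and $J\subset\{1,\dots,p\}$ of size $d_\rho$.

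Substituting the expansion into these evaluations at $z$ finishes the argument. The $\tilde\varphi_\rho$ contributions drop out: terms of $\pi$-degree $\geq 2$ need two of the slots to carry a nonzero $\pi$-value, but $\pi_a(e_{j_b})=0$ while only $v$ can provide one, and terms vanishing at $z$ are irrelevant by hypothesis. For the main term $\sum_{J'}\pi_\rho^{J'}\wedge\omega_{J'}$, only the piece where $v$ feeds $\pi_\rho^{J'}$ survives (since $\pi_\rho^{J'}(e_{j_b})=0$), and $\omega_{J'}(e_{j_1},\dots,e_{j_{d_\rho}})$ is nonzero precisely when $J'=J$ with $\sup J\leq p$. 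This yields $\varphi_\rho(v,e_{j_1},\dots,e_{j_{d_\rho}})|_z=\pi_\rho^J(v)|_z$, giving both the description of $H(E_p)$ and, as a codimension count, the formula for $c_p$. The main obstacle is the previous paragraph: one must justify that the polar conditions on the full ideal $\mathcal{I}_{p+1}$ collapse onto the generators $\varphi_\rho$, which hinges on the freedom in the factors $\beta_\rho$, combined with the duality $\omega_i(e_j)=\delta_{ij}$, to separate each of the evaluations $\varphi_\rho(v,e_{j_1},\dots,e_{j_{d_\rho}})$.
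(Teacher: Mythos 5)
Your argument is correct: the paper itself gives no proof of this proposition (it only cites \cite{ExtDiffSys-Book}, p.~80), and your reconstruction --- expanding each generator by $\pi$-degree, reducing the polar equations on $\mathcal{I}_{p+1}$ to the evaluations $\varphi_{\rho}(v,e_{j_{1}},\dots,e_{j_{d_{\rho}}})$ via the choice $\beta_{\rho}=\omega_{K}$, and then extracting $\pi_{\rho}^{J}(v)$ using $\pi_{a}(e_{j})=0$ and $\omega_{i}(e_{j})=\delta_{ij}$ --- is exactly the standard argument from that reference. No gaps; in particular you correctly justify the two delicate points, namely that the pure-$\omega$ part vanishes at $z$ because $E$ is integral and the $\omega_{i}|_{E}$ form a basis of $E^{*}$, and that the polar conditions collapse onto the generators.
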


\begin{proof}
See \cite{ExtDiffSys-Book}, page 80.
\end{proof}

\subsection{Cartan-K\"ahler's Theorem}
\paragraph{}The following theorem is a  generalization of the well-known Frobenius's  \linebreak  theorem.

\begin{theorem}\label{ThCartanKahler}(Cartan-K\"ahler)\\
Let $\mathcal{I}\subset \mathcal{A}^{\ast}(M)$ be a real analytic
exterior differential ideal. Let $P\subset M$ a $p$-dimensional connected
real analytic K\"ahler-Regular integral manifold of $\mathcal{I}$.\\
Suppose that $r=r(P)\geq 0$. Let  $R\subset M$ be a real analytic
submanifold of $M$ of  codimension $r$ which contains  $P$ and such
that  $T_{x}R$ and
$H(T_{x}P)$ are transversals in  $T_{x}M$ for all  $x\in P \subset M$.\\
There exists a $(p+1)$-dimensional connected real
analytic integral manifold $X$ of $\mathcal{I}$, such that
$P\subset X \subset R$. $X$
is unique in the sense that another  \linebreak integral manifold of
$\mathcal{I}$ having the stated properties, coincides
with $X$ on a open  \linebreak neighborhood of  $P$.
\end{theorem}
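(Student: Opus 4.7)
The plan is to reduce the extension problem to the Cauchy-Kowalevski theorem applied to a real analytic PDE system, and then use the fact that $\mathcal{I}$ is closed under $d$ to propagate integrality from the PDE to the whole ideal.

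First, I would localize around a point $x_0 \in P$ and choose adapted coordinates. Using Proposition \ref{PropCpdeHCp}, select a coframe $\omega^1,\ldots,\omega^{p+1},\pi^1,\ldots,\pi^s$ on a neighborhood of $x_0$ such that $T_{x_0}P$ is cut out by $\{\omega^{p+1}=0,\,\pi^a=0\}$, and $H(T_{x_0}P)$ is cut out precisely by the 1-forms $\pi_\rho^J\big|_{x_0}$ with $\sup J \le p$. The K\"ahler-regularity of $P$ guarantees that $\dim H(T_xP)$ is locally constant along $P$, so the polar space defines an analytic distribution near $P$. Combined with the transversality hypothesis $T_xR + H(T_xP) = T_xM$, this produces inside $R$ a unique (modulo $T_xP$) "time" direction $\partial_t \in T_xR \cap H(T_xP)$ along which to extend.

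Next, I would seek the integral manifold $X$ as a graph. Using $(x^1,\ldots,x^p,t)$ as parameters along $P$ and the chosen transverse direction, write $X$ locally as $(x^1,\ldots,x^p,t)\mapsto (x^i,\,u^\alpha(x,t))$, where the coordinate functions $u^\alpha$ parametrize the remaining directions inside $R$, with initial condition $u^\alpha(x,0)$ prescribing $P$. Inserting this ansatz into the standard expansions of the generators $\varphi_\rho$ from Proposition \ref{PropCpdeHCp}, the integral-element condition at each point becomes a first-order PDE system of the form $\partial_t u^\alpha = F^\alpha(x,t,u,\partial_x u)$. The K\"ahler-regularity of the flag means the rank of the symbol is maximal, so the system is in Kowalevski normal form; the matching of the codimension of $R$ with $r = \dim H(T_xP) - (p+1)$ ensures the system is determined rather than overdetermined. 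Applying Cauchy-Kowalevski, which is legitimate because $\mathcal{I}$, $P$, and $R$ are real analytic, yields a unique real analytic solution and hence a real analytic $(p+1)$-dimensional submanifold $X\subset R$ with $P \subset X$.

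The main obstacle is the last step: the PDE we solved only forces the pullback of the specific generating forms built into the Cauchy data to vanish on $X$, but we need $i^*\varphi = 0$ for every $\varphi \in \mathcal{I}$. Here the hypothesis that $\mathcal{I}$ is a differential ideal is indispensable. The argument is to show that for each $\varphi \in \mathcal{I}$, the pullback $\psi := i^*\varphi$ on $X$ satisfies a linear homogeneous ODE-like equation in the $t$-direction, where the ``closure'' $d\varphi \in \mathcal{I}$ is used to express $\partial_t \lrcorner \, d\psi$ in terms of $\psi$ itself and of forms already known to vanish on $X$. Since $\psi$ vanishes when $t=0$ (because $P$ is integral), uniqueness in the resulting Cauchy problem forces $\psi \equiv 0$ throughout $X$. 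Finally, patching the local constructions: uniqueness of the Cauchy-Kowalevski solution ensures the locally built pieces agree on overlaps, so they glue to a unique connected real analytic integral manifold $X$ of $\mathcal{I}$ with $P \subset X \subset R$, which is the statement of the theorem.
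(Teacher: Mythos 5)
Your outline is essentially the standard Cauchy--Kowalevski argument for the Cartan--K\"ahler theorem, which is exactly the proof the paper points to (it gives no proof of its own, only the citation to Bryant--Chern--Gardner--Goldschmidt--Griffiths, p.~82): reduce to a determined analytic Cauchy problem inside $R$ using the transversality and the codimension count $\dim\bigl(T_xR\cap H(T_xP)\bigr)=p+1$, solve it, and then use $d\mathcal{I}\subset\mathcal{I}$ to set up a linear homogeneous Cauchy problem with zero initial data forcing the remaining generators to pull back to zero. The sketch correctly identifies all the load-bearing steps (regularity for constant rank, the role of $R$ in making the system determined, and the closure argument as the main obstacle), so it matches the cited proof in approach and substance.
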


\begin{proof}
See \cite{ExtDiffSys-Book}, page 82.
\end{proof}

The analicity condition of the exterior differential ideal is crucial because of the requirements in the Cauchy-Kowalewski theorem used in the proof of the
Cartan-K\"ahler theorem.\\

Cartan-K\"ahler's theorem has an important corollary. Actually,
this  \linebreak corollary is often more used than the theorem
and it is sometimes called  \textit{the Cartan-K\"ahler theorem}.

\begin{corollary}\label{CorCartanKahler}(Cartan-K\"ahler)\\
Let $\mathcal{I}$ be an analytic exterior differential ideal on a manifold
$M$. If $E\subset T_{z}M$ is  \linebreak an ordinary integral element of
$\mathcal{I}$, there exists an integral manifold of
$\mathcal{I}$ passing through $z$ and having $E$ as a tangent
space at the point $z$.
\end{corollary}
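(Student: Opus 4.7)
The plan is to deduce the corollary from Theorem~\ref{ThCartanKahler} by induction on the dimension of the integral element, walking up the integral flag whose existence is guaranteed by the fact that $E$ is ordinary. Since $E$ is ordinary, by definition there is a flag $(0)_z \subset E_1 \subset E_2 \subset \dots \subset E_n = E \subset T_z M$ in which every $E_k$ with $k<n$ is Kähler-regular, and the base point $z$ is an ordinary zero of $\mathcal{I}_0 = \mathcal{I}\cap \mathcal{A}^0(M)$. The goal is to build, step by step, a chain of real analytic integral manifolds $P_1 \subset P_2 \subset \dots \subset P_n$ such that $\dim P_k = k$, $z \in P_k$, and $T_z P_k = E_k$; the final manifold $X = P_n$ will be the one the corollary asserts exists.

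The initialization is to produce $P_1$. Since $z$ is an ordinary zero of $\mathcal{I}_0$, the common zero locus of the $0$-forms in $\mathcal{I}$ is, near $z$, a real analytic submanifold whose tangent space at $z$ contains $E_1$; inside this locus, a short analytic curve through $z$ with velocity spanning $E_1$ serves as $P_1$, and it is automatically an integral manifold because $\mathcal{I}_1$ vanishes on $E_1$. For the inductive step, suppose $P_p$ has been constructed with $T_z P_p = E_p$ and $p < n$. Since $E_p$ is Kähler-regular, the function $r$ is constant in a neighborhood of $E_p$ in $\mathcal{V}_p(\mathcal{I})$ with value $r = r(E_p) \geq 0$ (we may assume $r \geq 0$; if $r = -1$, any transverse direction works and there is nothing to do beyond choosing an auxiliary hypersurface). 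Here I would apply Theorem~\ref{ThCartanKahler} to $P = P_p$: choose a real analytic submanifold $R \subset M$ of codimension $r$ containing $P_p$ and transverse to $H(T_x P_p)$ for all $x \in P_p$ near $z$, for instance by taking $R$ to be cut out by $r$ analytic functions vanishing on $P_p$ whose differentials at $z$ form a complement to the annihilator of $H(E_p)$ inside the annihilator of $E_p$. Theorem~\ref{ThCartanKahler} then produces a $(p+1)$-dimensional real analytic integral manifold $P_{p+1} \subset R$ containing $P_p$; by construction $T_z P_{p+1}$ is a $(p+1)$-dimensional integral element containing $E_p$ and lying in $T_z R$, and since $T_z R \cap H(E_p)$ has dimension exactly $p+1$ while every integral extension of $E_p$ must lie in $H(E_p)$, we must have $T_z P_{p+1} = E_{p+1}$, as desired.

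Iterating this procedure $n$ times yields the required integral manifold $X = P_n$ passing through $z$ with $T_z X = E_n = E$. The main obstacle is the inductive step, and more specifically the choice of the auxiliary submanifold $R$ at each stage: one has to arrange both that $R$ contains the previously built $P_p$ (which is a global condition along $P_p$, not just at $z$) and that $R$ meets $H(T_x P_p)$ transversally at every $x \in P_p$ near $z$, while simultaneously cutting out from $H(E_p)$ the specific $(p+1)$-plane $E_{p+1}$ fixed by the flag. Analyticity must be preserved at each step so that Theorem~\ref{ThCartanKahler} remains applicable; this is where one relies on the fact that $\mathcal{I}$ itself is an analytic ideal and that the Cauchy--Kowalewski machinery underlying the theorem returns analytic solutions, so the $P_k$ stay in the analytic category throughout the induction.
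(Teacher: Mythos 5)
The paper itself offers no proof of this corollary (it simply defers to \cite{ExtDiffSys-Book}), so there is nothing to compare line by line; but your strategy --- inducting up the ordinary flag and applying Theorem~\ref{ThCartanKahler} at each stage with a restraining manifold $R$ chosen so that $T_{z}R\cap H(E_{p})=E_{p+1}$ --- is exactly the standard argument, and your inductive step is sound: the transversality count $\dim\big(T_{z}R\cap H(E_{p})\big)=(\dim M-r)+(p+1+r)-\dim M=p+1$, combined with the inclusions $E_{p}\subset T_{z}P_{p+1}\subset T_{z}R\cap H(E_{p})$, does force $T_{z}P_{p+1}=E_{p+1}$, and shrinking everything to a neighborhood of $z$ disposes of the global transversality worry you raise.

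The initialization, however, is genuinely wrong as written. A short analytic curve through $z$ with velocity spanning $E_{1}$ is \emph{not} automatically an integral manifold: being an integral manifold requires $i^{\ast}\alpha=0$ for every $\alpha\in\mathcal{I}_{1}$ at \emph{every} point of the curve, whereas the hypothesis only gives $\alpha|_{E_{1}}=0$ at the single point $z$. (If $\mathcal{I}$ is generated by a nowhere-vanishing $1$-form $\alpha$ with $\alpha|_{E_{1}}=0$, a generic curve tangent to $E_{1}$ at $z$ fails to annihilate $\alpha$ away from $z$.) The correct base case is to start the induction one step earlier, at the $0$-dimensional integral manifold $P_{0}=\{z\}$ --- an integral manifold precisely because $z$ is an ordinary zero of $\mathcal{I}_{0}$ --- and to obtain $P_{1}$ by the very same application of Theorem~\ref{ThCartanKahler} you use in the inductive step, with $E_{0}=(0)_{z}$ playing the role of the K\"ahler-regular element. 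Your parenthetical about $r=-1$ is also backwards: if $r(E_{p})=-1$ then $H(E_{p})=E_{p}$ and \emph{no} $(p+1)$-dimensional integral element contains $E_{p}$, so far from being harmless this case would be fatal; fortunately it cannot occur here, since $E_{p+1}\subset H(E_{p})$ already forces $r(E_{p})\geq 0$ for every $p<n$. With these two repairs the argument is complete.
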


\section{Local Isometric Embedding Problem}
\paragraph{}We shall state and prove the Burstin-Cartan-Janet-Schlaefli's theorem \linebreak  concerning
local isometric embedding of a real analytic Riemannian manifold.
The names of the mathematicians are given in alphabetic order.
Schlaefli in his paper in 1871 \cite{Schlaefli-Art} conjectured
that an $m$-dimensional Riemannian manifold can always be, locally,  embedded
in an $N=\frac{1}{2}m(m+1)$ dimensional Euclidean space. In 1926,
Janet \cite{Janet-Art} proved the result for the dimension 2 by
resolving a differential system and explain how we get the result in the general case.
 In 1927, \'Elie Cartan \cite{Cartan-Art} gave the
complete proof of the result. His method is based on his theory of
involutive Pfaffian system. Later in 1931, Burstin
\cite{Burstin-Art} generalized Janet's method and obtained the
result in the general case.

The proof that we shall give is inspired by Cartan'paper
\cite{Cartan-Art}, the  Bryant, Chern, Gardner, Goldscmidt et
Griffiths's book \cite{ExtDiffSys-Book} and the Griffiths et
Jensen's book \cite{GriffithJensen-Book}.

\subsection{The Burstin-Cartan-Janet-Schlaefli theorem}
\begin{theorem}\label{ThBCJS}(Burstin 1931-Cartan 1927-Janet 1926-Schlaefli 1871)\\
Every $m$-dimensional real analytic Riemannian manifold can be
locally  \linebreak embedded isometrically in an
$\displaystyle{\frac{m(m+1)}{2}}$-dimensional Euclidean space.
\end{theorem}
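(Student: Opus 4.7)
The plan is to reformulate the isometric embedding of $(M,g)$ into $\mathbb{R}^N$, with $N=m(m+1)/2$, as the problem of finding a real analytic integral manifold of a carefully chosen analytic exterior differential ideal, and then to invoke Corollary \ref{CorCartanKahler} of the Cartan-K\"ahler theorem. I would work on the product $\mathcal{F}=M\times F(\mathbb{R}^N)$, where $F(\mathbb{R}^N)$ is the orthonormal frame bundle of $\mathbb{R}^N$, equipped with its Maurer-Cartan 1-forms $\theta_A$ and $\theta_{AB}=-\theta_{BA}$ ($A,B=1,\dots,N$) satisfying the flat structure equations $d\theta_A=-\sum_B\theta_{AB}\wedge\theta_B$ and $d\theta_{AB}=-\sum_C\theta_{AC}\wedge\theta_{CB}$, and pull back the orthonormal coframe $\eta_i$ and Levi-Civita connection forms $\omega_{ij}$ from $M$. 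Splitting the indices as $i,j=1,\dots,m$ (tangential) and $a,b=m+1,\dots,N$ (normal), I define $\mathcal{I}$ to be the differential ideal generated by the 1-forms $\theta_i-\eta_i$, $\theta_a$, and $\theta_{ij}-\omega_{ij}$ (for $i<j$). An $m$-dimensional integral manifold of $\mathcal{I}$ that projects diffeomorphically onto a neighborhood in $M$ provides exactly an adapted isometric embedding.

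Next I would close $\mathcal{I}$ under exterior differentiation. Combining the flat structure equations on $F(\mathbb{R}^N)$ with the Cartan structure equations on $M$, differentiation of $\theta_a$ yields the constraint $\sum_i\theta_{ai}\wedge\eta_i\equiv 0$ modulo $\mathcal{I}$, so that by the Cartan Lemma (Lemma \ref{LemmedeCartan}) every integral element forces $\theta_{ai}=\sum_j h_{aij}\eta_j$ with $h_{aij}=h_{aji}$; these are the second fundamental form coefficients. Differentiation of $\theta_{ij}-\omega_{ij}$ produces the Gauss 2-forms $\Omega_{ij}-\sum_a\theta_{ia}\wedge\theta_{ja}$, which must also vanish on integral elements, giving the familiar quadratic Gauss equation in the $h_{aij}$. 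These are the only obstructions, and the resulting closed ideal is real analytic because both $M$ and the Maurer-Cartan forms are.

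The core of the proof is verification of Cartan's test (Theorem \ref{TestCartan}) at a generic integral element. I would choose an adapted coframe at a reference point of $\mathcal{F}$ and build the integral flag $E_1\subset\cdots\subset E_m$ by nesting according to $\eta_{p+1}=\cdots=\eta_m=0$ on $E_p$, then apply Proposition \ref{PropCpdeHCp} to read off each polar codimension $c_p$ directly from the normal 1-forms $\pi_\rho^J$ attached to the generators $\theta_a$, $\theta_{ij}-\omega_{ij}$, and the Gauss 2-forms. The dimension of $\mathcal{V}_m(\mathcal{I},\eta_1\wedge\cdots\wedge\eta_m)$ is controlled by the symmetric parameters $h_{aij}$, which total $(N-m)\cdot m(m+1)/2$; matching this count against the sum $c_0+c_1+\cdots+c_{m-1}$ dictated by the flag is precisely where the dimension $N=m(m+1)/2$ becomes forced, since it is the minimal codimension for which enough normal directions are available at \emph{every} step of the flag to absorb the quadratic Gauss constraints. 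Once involutivity is established, a generic integral element is ordinary, and Corollary \ref{CorCartanKahler} produces a local real analytic $m$-dimensional integral manifold through any prescribed adapted frame; its image under the projection $\mathcal{F}\to\mathbb{R}^N$ is the desired isometric embedding.

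The main obstacle will be the bookkeeping in Cartan's test: one must set up the generators so that Proposition \ref{PropCpdeHCp} is directly applicable, identify the $\pi_\rho^J$ cleanly, verify that the chosen flag is generic (in particular that each $E_p$ is K\"ahler-regular), and carry out the character computation to show that $c_0+c_1+\cdots+c_{m-1}$ equals the actual codimension of $\mathcal{V}_m(\mathcal{I})$ exactly at $N=m(m+1)/2$. The real analyticity hypothesis is indispensable because Cartan-K\"ahler relies on Cauchy-Kowalewski; without it, the proof strategy breaks down, which is consistent with the fact that in the smooth category the minimal embedding dimension $m(m+1)/2$ is no longer sufficient in general.
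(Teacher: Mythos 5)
Your outline follows essentially the same route as the paper: set up the Pfaffian system $\{\omega_i-\eta_i,\ \omega_a,\ \omega_{ij}-\eta_{ij}\}$ on the product of $M$ with an (adapted) orthonormal frame bundle of $\mathbb{E}^N$, close it, extract the second fundamental form via the Cartan Lemma and the Gauss equation, verify Cartan's test with Proposition \ref{PropCpdeHCp}, and conclude with Corollary \ref{CorCartanKahler}. The minor differences (working on the full frame bundle rather than the quotient $\mathcal{F}_m(\mathbb{E}^N)$, and including $\omega_{ij}-\eta_{ij}$ among the generators from the outset rather than deriving $\omega_{ij}=\eta_{ij}$ from $d(\omega_i-\eta_i)$) are cosmetic.

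There is, however, one genuine gap in the way you propose to carry out the Cartan test. You say the dimension of $\mathcal{V}_m(\mathcal{I},\Delta)$ is ``controlled by the symmetric parameters $h_{aij}$, which total $(N-m)\,m(m+1)/2$.'' That free count gives codimension $\frac{Nm(m+1)}{2}$ for the space of integral elements, whereas the flag yields $\sum_{p=0}^{m-1}c_p=\frac{Nm(m+1)}{2}+\frac{m^2(m^2-1)}{12}$; the extra term is exactly $\dim\mathcal{K}_m$, the dimension of the space of algebraic curvature tensors, and it enters because the $h_{aij}$ are \emph{not} free: they must satisfy the Gauss equation $\gamma(h)=\mathcal{R}(x)$. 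To make the two counts agree you therefore need precisely Lemma \ref{LemmeSubSurjectThm}: for $N-m\geq\frac{m(m-1)}{2}$ the Gauss map $\gamma:\mathcal{H}\to\mathcal{K}_m$ is a \emph{surjective submersion}. Surjectivity guarantees that the solution set $\mathcal{Z}=\{\gamma(h)=\mathcal{R}(x)\}$ is nonempty (so that integral elements exist over every point of $M$), and the submersion property guarantees that it is a submanifold of codimension exactly $\dim\mathcal{K}_m=\frac{m^2(m^2-1)}{12}$ (Proposition \ref{ProDimKm}), which is what closes the Cartan test and shows the integral elements in $\Phi(\mathcal{Z})$ are ordinary. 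This algebraic lemma is where the threshold $N\geq\frac{m(m+1)}{2}$ actually does its work; without it your dimension match cannot be completed, and the genericity (K\"ahler-regularity) of the flag you invoke is not justified. The rest of your plan, including the role of analyticity via Cauchy--Kowalewski, is consistent with the paper.
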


\subsection{Proof of
Burstin-Cartan-Janet-Schlaefli's theorem}

\paragraph{\textbf{Steps of the proof of theorem\ref{ThBCJS}.}}

\begin{enumerate}
\item We shall write down the Cartan structure equations for an $m$-dimensional
real analytic Riemannian manifold $M$.

\item We shall define a subbundle $\mathcal{F}_{m}(\mathbb{E}^{N})$ of the   bundle
$\mathcal{F}(\mathbb{E}^{N})$ of the \linebreak 
Euclidean space $\mathbb{E}^{N}$, then shall write down the Cartan structure \linebreak 
equations for the subbundle $\mathcal{F}_{m}(\mathbb{E}^{N})$.

\item Given an exterior differential system  $I_{0}$ on
$M\times\mathcal{F}_{m}(\mathbb{E}^{N})$, which is not close, we
shall prove Claim \ref{ProCartanJanet}, which proves that the existence of a local isometric embedding  of $M$ is
the existence of an $m$-dimensional integral manifold of $I_{0}$.

\item We will extend  this differential system to obtain  a closed one. In the process of extension, we will get new
equations  (the Gauss equation (equ.  \linebreak \ref{EquaGauss})). We will also
show that a closed exterior differential system $\tilde{I}$ with fewer 
 1-forms than  $I$, will generate the same
differential ideal that the one generated by $I$ if the  Gauss's
equation is satisfied.

\item We establish the lemma \ref{LemmeSubSurjectThm}., that ensure that the Gauss equations is a surjective submersion.
We shall obtain a submanifold with a known
dimension.

\item Given the closed exterior differential ideal,, we shall prove
the existence of an ordinary integral element  by using claim
\ref{PropCpdeHCp} and the Cartan test.  \linebreak Finally,the Cartan-Kahler
theorem ensure then the existence of an  \linebreak  integral manifold and
lead us to conclude..

\end{enumerate}

\subsubsection*{Step 1:}

Let $(M,g)$ be an $m$-dimensional real analytic Riemannian manifold, where $g$ is a Riemannian metric, i.e. a 
covariant symmetric positive defined 2-tensor, such that at a given point
$p$ of $M$, $g_{p}$ in a orthonormed basis reduce to the identity matrix. However in a open neighborhood of
$p$, the matrix of $g$ can not always be the identity but it can always be  reduced to diagonal matrix:
\begin{equation}
g=g_{11}dx^{1}\otimes dx^{1}+g_{22}dx^{2}\otimes dx^{2}+ \dots +
g_{mm}dx^{m}\otimes dx^{m}
\end{equation}
where the  terms $g_{ii}$ are positive functions such that $g_{ii}=1$ at $p$.
We denote than $\eta_{i}=\sqrt{g_{ii}}dx^{i}$. $g$ can be written
as follows:

\begin{equation}
g=\eta_{1}\otimes \eta_{1}+ \eta_{2}\otimes \eta_{2}+ \dots +
\eta_{m}\otimes \eta_{m}
\end{equation}

$\eta=(\eta_{1},\eta_{2},\dots , \eta_{m})$ is than a orthonormal
coframe in the neighborhood of $p\in M$. We can establish the
Cartan's  structure equations:

\paragraph{\textbf{Cartan's structure equations on $M$:}}
\begin{equation}
\begin{cases}
\displaystyle{d\eta_{i}+\sum_{j=1}^{m}\eta_{ij}\wedge \eta_{j}=0 \qquad \text{( torsion-free )}}\\
\displaystyle{d\eta_{ij}+\sum_{k=1}^{m}\eta_{ik}\wedge
\eta_{kj}=\Omega_{ij}}
\end{cases}
\end{equation}
where $(\eta_{ij})$ is the matrix of  1-form of the
L\'evi-Civita's connection on $M$ (a torsion-free connection
compatible with the metric $g$).  $\Omega_{ij}$ is the
curvature 2-form of the connection.

\subsubsection*{Step 2:}

Let  $\mathbb{E}^{N}$ be an $N$-dimensional Euclidean space (for
the moment, $N>m$)  \linebreak endowed with the usual scalar product $\varepsilon_{N}$. Let us consider
$\mathcal{F}(\mathbb{E}^{N})$  a  positively-oriented orthonormal frame bundle
on $\mathbb{E}^{N}$. In what follows, we will not  work on the
entire bundle $\mathcal{F}(\mathbb{E}^{N})$,but on a quotient.  An element in $\mathcal{F}_{m}(\mathbb{E}^{N})$ has the form $(x;e_{1}, e_{2},\dots ,e_{m})$, where $x\in \mathbb{E}^{N}$ and $(e_{1},
e_{2},\dots ,e_{m})$ is a  positively-oriented orthonormal  set of vectors in
$\mathbb{E}^{N}$. We can consider
$\mathcal{F}_{m}(\mathbb{E}^{N})$ as follows: among all the positively-oriented
orthonormal frames of $\mathcal{F}(\mathbb{E}^{N})$, we take the
frames such that the first $m$ elements form a positively-oriented orthonormal set
of vectors, then we take  the $m$ first vectors of theses
frames. So,  $\mathcal{F}_{m}(\mathbb{E}^{N})$ is diffeomorphic to
 $\displaystyle{\mathbb{E}^{N}\times \frac{SO(N)}{SO(N-m)}}$.

\begin{prop}\label{DimFmEN}
\begin{equation}
dim \mathcal{F}_{m}(\mathbb{E}^{N})=N(m+1)-\frac{m(m+1)}{2}
\end{equation}
\end{prop}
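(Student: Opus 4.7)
The plan is to exploit the diffeomorphism $\mathcal{F}_m(\mathbb{E}^N)\cong \mathbb{E}^N\times SO(N)/SO(N-m)$ already established in the paragraph preceding the proposition, and reduce the claim to a purely algebraic dimension count. Since dimension is additive on products and behaves in the standard way on homogeneous spaces of Lie groups (using the fact that $SO(N-m)$ sits inside $SO(N)$ as a closed Lie subgroup with free right action), it suffices to compute
\begin{equation}\nonumber
\dim\mathcal{F}_m(\mathbb{E}^N) \;=\; \dim\mathbb{E}^N \;+\; \dim SO(N) \;-\; \dim SO(N-m).
\end{equation}

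First I would recall that $\dim\mathbb{E}^N=N$ (this accounts for the choice of the base point $x$), and that the dimension of the special orthogonal group is $\dim SO(k)=\tfrac{k(k-1)}{2}$ (this may be justified by noting that the Lie algebra $\mathfrak{so}(k)$ consists of skew-symmetric $k\times k$ matrices, a vector space of the stated dimension; alternatively one recovers it from the Gram--Schmidt parameter count $\sum_{i=1}^{k-1}i$). The geometric interpretation is that one needs $N-1$ parameters to pick the unit vector $e_1$ on the sphere $S^{N-1}\subset\mathbb{E}^N$, then $N-2$ for $e_2$ orthogonal to $e_1$, and so on down to $N-m$ parameters for $e_m$, giving
\begin{equation}\nonumber
\sum_{i=1}^{m}(N-i) \;=\; Nm-\frac{m(m+1)}{2}
\end{equation}
parameters for the orthonormal $m$-frame $(e_1,\dots,e_m)$.

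The remaining step is the routine algebraic simplification. Adding the $N$ base-point parameters to $Nm-\tfrac{m(m+1)}{2}$ yields
\begin{equation}\nonumber
N+Nm-\frac{m(m+1)}{2}\;=\;N(m+1)-\frac{m(m+1)}{2},
\end{equation}
which is precisely the claimed formula. Equivalently, one checks the identity
\begin{equation}\nonumber
\frac{N(N-1)}{2}-\frac{(N-m)(N-m-1)}{2}\;=\;Nm-\frac{m(m+1)}{2}
\end{equation}
by expanding both sides. There is no real obstacle here: the only conceptual point worth stressing is that the quotient $SO(N)/SO(N-m)$ really does parametrize the choice of the first $m$ vectors of a positively oriented orthonormal frame, because $SO(N-m)$ acts on $\mathcal{F}(\mathbb{E}^N)$ by rotating the last $N-m$ vectors while leaving $(e_1,\dots,e_m)$ fixed, so its orbits are exactly the fibers of the projection $\mathcal{F}(\mathbb{E}^N)\to\mathcal{F}_m(\mathbb{E}^N)$.
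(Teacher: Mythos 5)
Your proof is correct and follows exactly the route the paper intends: the proposition is stated without an explicit proof, but the diffeomorphism $\mathcal{F}_m(\mathbb{E}^N)\cong\mathbb{E}^N\times SO(N)/SO(N-m)$ given in the preceding paragraph is clearly the basis for the dimension count, and your computation $N+\frac{N(N-1)}{2}-\frac{(N-m)(N-m-1)}{2}=N(m+1)-\frac{m(m+1)}{2}$ is accurate.
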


 We define on  $\mathcal{F}(\mathbb{E}^{N})$ a set of  1-forms as follows\footnote{The indices  $i,j$ and
$k$ vary from 1 to $m$,
 the indexes  $a,b$ and $c$ vary from $m+1$ to $N$ and  the indexes $\mu , \nu$
 and  $\lambda$ vary from $1$ to $N$.}:

\begin{equation}
\omega_{\mu}=e_{\mu}dx \quad \text{ and }\quad
\omega_{\mu\nu}=e_{\mu}de_{\nu}=- e_{\nu}de_{\mu}=-\omega_{\nu\mu}
\end{equation}

So $(\omega_{1},\omega_{2},\dots ,\omega_{m},\omega_{m+1},\dots
,\omega_{N})$ form an orthonormal coframe of
$\mathcal{F}(\mathbb{E}^{N})$. Than the Cartan structure equations on $\mathcal{F}_{m}(\mathbb{E}^{N})$
are:

\begin{equation}
\begin{cases}
\displaystyle{d\omega_{\mu}+\sum_{\nu=1}^{N}\omega_{\mu\nu}\wedge \omega_{\nu}=0 \qquad \text{( torsion-free)}}\\
\displaystyle{d\omega_{\mu\nu}+\sum_{\lambda=1}^{N}\omega_{\mu\lambda}\wedge
\omega_{\lambda\nu}=0 \qquad \text{( flat curvature )}}
\end{cases}
\end{equation}

Notice that  $(\omega_{\mu\nu})$ is the $N\times N$ skew-symmetric
matrix connection form of the L\'evi-Civita  connection on $\mathbb{E}^{N}$.

\subsubsection*{Step 3:}

Let consider the product manifold $M\times
\mathcal{F}_{m}(\mathbb{E}^{N})$. Let $\mathcal{I}_{0}$ be the
exterior ideal on  $M\times \mathcal{F}_{m}(\mathbb{E}^{N})$
generated by the Paffafian system
$I_{0}=\{(\omega_{i}-\eta_{i}),\omega_{a}\}$.

\begin{prop}\label{ProCartanJanet}
Every $m$-dimensional integral manifold of  $\mathcal{I}_{0}$ on
which the form $\Delta=\omega_{1}\wedge\dots \wedge\omega_{m}$
does not vanish is locally the graph of a function
$f:M\longrightarrow \mathcal{F}_{m}(\mathbb{E}^{N})$ having the
property that $u=x\circ f :M \longrightarrow \mathbb{E}^{N}$ is a
local isometric embedding \footnote{ Conversely, each local
isometric embedding $u:M\longrightarrow\mathbb{E}^{N}$ come
uniquely from this construction.}.
\end{prop}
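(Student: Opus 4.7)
The plan is to use the non-degeneracy condition $\Delta\neq 0$ to realize the integral manifold as the graph of a map $f:U\subset M\to\mathcal{F}_m(\mathbb{E}^N)$, and then to read off the isometric embedding property directly from the Pfaffian conditions.

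First I would observe that since $\mathcal{I}_0$ is the exterior ideal generated by the Pfaffian system $I_0=\{\omega_i-\eta_i,\omega_a\}$, an $m$-dimensional integral manifold $i:N\hookrightarrow M\times\mathcal{F}_m(\mathbb{E}^N)$ must satisfy $i^{\ast}\omega_i=i^{\ast}\eta_i$ for $i=1,\dots,m$ and $i^{\ast}\omega_a=0$ for $a=m+1,\dots,N$. Consequently $i^{\ast}\Delta=(\pi_M\circ i)^{\ast}(\eta_1\wedge\cdots\wedge\eta_m)$, where $\pi_M$ is the product projection. The hypothesis $i^{\ast}\Delta\neq 0$ forces $\pi_M\circ i$ to have maximal rank $m$, hence to be a local diffeomorphism $N\to M$ (dimensions agree). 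The implicit function theorem then writes $N$ locally as the graph of a real analytic map $f:U\to\mathcal{F}_m(\mathbb{E}^N)$, and the pullback conditions become $f^{\ast}\omega_i=\eta_i$ and $f^{\ast}\omega_a=0$.

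Setting $u=x\circ f$, the key calculation uses the tautological identity $dx=\sum_{\mu=1}^{N}\omega_\mu\,e_\mu$ on $\mathcal{F}(\mathbb{E}^N)$. For any $v\in T_pU$,
\begin{equation*}
du(v)=dx\bigl(df(v)\bigr)=\sum_{\mu=1}^{N}(f^{\ast}\omega_\mu)(v)\,e_\mu=\sum_{i=1}^{m}\eta_i(v)\,e_i,
\end{equation*}
which is of rank $m$ because $(\eta_1,\dots,\eta_m)$ is a coframe on $M$, so $u$ is an immersion; taking the squared Euclidean norm yields $|du(v)|^2_{\varepsilon_N}=\sum_i\eta_i(v)^2=g(v,v)$, i.e.\ $u^{\ast}\varepsilon_N=g$. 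Shrinking $U$ if necessary makes $u$ injective, giving a local isometric embedding.

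The main obstacle is not a deep one but a bookkeeping issue: the frames in $\mathcal{F}_m(\mathbb{E}^N)$ only carry the first $m$ vectors, so the complementary forms $\omega_a$ and the expansion $dx=\sum\omega_\mu e_\mu$ genuinely live on the full bundle $\mathcal{F}(\mathbb{E}^N)$. I would handle this by choosing a local section of $\mathcal{F}(\mathbb{E}^N)\to\mathcal{F}_m(\mathbb{E}^N)$ (i.e.\ smoothly completing the partial frame by $e_{m+1},\dots,e_N$) and checking that both the conditions $f^{\ast}\omega_a=0$ and the resulting identity $u^{\ast}\varepsilon_N=g$ are invariant under the $SO(N-m)$-action rotating that choice; equivalently, the computation of $du$ only uses $dx$ itself, not the individual complementary $\omega_a$. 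Apart from this, the proof is a direct unwinding of definitions with no analysis required.
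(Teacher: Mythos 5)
Your argument is correct. Note that the paper itself states Proposition \ref{ProCartanJanet} without proof (the details are deferred to the cited references and the author's thesis), so there is no in-paper argument to compare against; what you give is precisely the standard unwinding one would expect: $\Delta\neq 0$ forces the projection to $M$ to be a local diffeomorphism, the graph conditions become $f^{\ast}\omega_i=\eta_i$, $f^{\ast}\omega_a=0$, and the tautological identity $dx=\sum_\mu \omega_\mu e_\mu$ yields $du(v)=\sum_i\eta_i(v)e_i$, hence $u^{\ast}\varepsilon_N=\sum_i\eta_i\otimes\eta_i=g$ and $u$ is an immersion. Your closing remark on well-definedness is also the right one to make: on $\mathcal{F}_m(\mathbb{E}^N)$ only the span of $\{\omega_a\}_{a>m}$ (equivalently the condition $dx\in\mathrm{span}(e_1,\dots,e_m)$) is intrinsic, and both the ideal $\mathcal{I}_0$ and the computation of $du$ depend only on that span, so the $SO(N-m)$ ambiguity is harmless.
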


$$
\begin{xy}
\xymatrix{
      M \ar[rrd]_{u} \ar[rr]^{f}    & &    \mathcal{F}_{m}(\mathbb{E}^{N}) \ar[d]^{x}\\
      &   &\mathbb{E}^{N}}
\end{xy}
$$

\subsubsection*{Step 4:}

According to proposition \ref{ProCartanJanet}., the existence of
an integral manifold of $\mathcal{I}_{0}$ for wich
$\Delta$ is non zero, is a neccessary condition for the existence of a local
isometric embedding. However, the theorems and the results
that we discussed deal with closed exterior differential system . Therefore it is
natural to add to the Pffafian system $I_{0}$ the exterior
differentiation of each  1-form. We obtain so a closed exterior
differential system:
$I_{0}\cup dI_{0}$. When we compute the exterior differentiation of
$(\omega_{i}-\eta_{i})$, we remark new differential forms and an
interesting result,

\begin{equation}
d(\omega_{i}-\eta_{i})=-\sum_{j=1}^{m}(\omega_{ij}-\eta_{ij})\wedge\omega_{i}=0
\end{equation}

By Cartan's lemma,
$\displaystyle{\omega_{ij}-\eta_{ij}=\sum_{k=1}^{m}h_{ijk}\omega_{k}}$,
with $h_{ijk}=h_{ikj}=-h_{jik}$. With the
symmetry and the skew-symmetry of the functions $h_{ijk}$, we
conclude that $h_{ijk}$ are zero and so,
$\omega_{ij}-\eta_{ij}=0$. This result has a geometric
intrepretation: $\omega_{ij}-\eta_{ij}=0$ implies that
$f^{\ast}(\omega_{ij})=\eta_{ij}$ where $f$ is the function of
proposition. \ref{ProCartanJanet}, which means that the pull-back
of L\'evi-Civita connection by an isometric embedding is the
L\'evi-Civita connection on $M$.

So, we extend the exterior differential $I_{0}$ and we obtain  \linebreak an
exterior differential system on $M\times
\mathcal{F}_{m}(\mathbb{E}^{N})$
$I_{1}=\{(\omega_{i}- \linebreak \eta_{i})_{i=1,\dots , m},
(\omega_{a})_{a=m+1,\dots , N}, (\omega_{ij}-\eta_{ij})_{1\leq i <
j \leq m}\}$. In order to have a closed \linebreak  one, we add the exterior
differentiation of each form and we  get \linebreak  $I=I_{1}\cup dI_{1}$. We
denote $\mathcal{I}$ the exterior differential ideal generated by \linebreak 
$I=\{(\omega_{i}-\eta_{i}),\omega_{a},(\omega_{ij}-\eta_{ij}),
d(\omega_{i}-\eta_{i}),d\omega_{a},d(\omega_{ij}-\eta_{ij})\}$.

Instead of looking for integral manifold of $\mathcal{I}_{0}$, we
will look for the existence of an integral manifold of
$\mathcal{I}$.

From the structure equations stated earlier, we obtain the
following system:

\begin{equation}\label{Syst6equa}
\begin{cases}
d(\omega_{i}-\eta_{i})\equiv 0 \qquad mod \ I_{1} \\
\displaystyle{d\omega_{a}\equiv -\sum_{i=1}^{m}\omega_{ai}\wedge\omega_{i}} \qquad mod \ I_{1}\\
\displaystyle{d(\omega_{ij}-\eta_{ij})\equiv
\sum_{a=m+1}^{N}\omega_{ai}\wedge\omega_{aj}-\Omega_{ij}}\qquad
mod \ I_{1}
\end{cases}
\end{equation}

On $\mathbb{X}$, the integral manifold of $\mathbb{X}$,
$\omega_{a}=0$ , so $d\omega_{a}=0$ too. We conclude that
$\displaystyle{\sum_{i=1}^{m}\omega_{ai}\wedge\omega_{i}=0}$. The
Cartan lemma (lemma \ref{LemmedeCartan}., page
\pageref{LemmedeCartan}) ensures the existence of  $m^{2}$
functions $h_{aij}$ such that
$\displaystyle{\omega_{ai}=\sum_{j=1}^{m}h_{aij}\omega_{j}}$ where
$h_{aij}=h_{aji}$. We can write then:
$\displaystyle{\omega_{ai}-\sum_{j=1}^{m}h_{aij}\omega_{j}=0}$ on
$\mathbb{X}$.

However, nothing lead us to think that this equality will
be true outside $\mathbb{X}$. We define then  the
differential 1-form $\pi_{ai}$ on $M\times\mathcal{F}_{m}(\mathbb{E}^{N})$ as follows
\begin{equation}\label{PIai}
\pi_{ai}=\omega_{ai}-\sum_{j=1}^{m}h_{aij}\omega_{j}
\end{equation}

Consider now, the last equation of the system (\ref{Syst6equa})
\begin{equation}\label{EquSys6equaDerniere}
d(\omega_{ij}-\eta_{ij})\equiv
\sum_{a=m+1}^{N}\omega_{ai}\wedge\omega_{aj}-\Omega_{ij}\qquad
mod\ I
\end{equation}

On $\mathbb{X}$, $\omega_{ij}-\eta_{ij}=0$, so
$d(\omega_{ij}-\eta_{ij})=0$. restricted to $\mathbb{X}$,
(\ref{EquSys6equaDerniere}) becomes
\begin{equation}\label{Equa6SystSurX}
\sum_{a=m+1}^{N}\omega_{ai}\wedge\omega_{aj}=\Omega_{ij}.
\end{equation}

Using (\ref{PIai}), we can write
(\ref{Equa6SystSurX}) as follows
\begin{equation}\label{Omegaij=haikhajl}
\text{On }\mathbb{X}:\quad \Omega_{ij}=
\sum_{k,l=1}^{m}\Big(\sum_{a=m+1}^{N}(h_{aik}h_{ajl}-h_{ail}h_{ajk})\Big)\omega_{k}\otimes\omega_{l}
\end{equation}

from
$\Omega_{ij}=\displaystyle{\sum_{k,l=1}^{m}\mathcal{R}_{ijkl}\eta_{k}\otimes\eta_{l}=
\sum_{k,l=1}^{m}\mathcal{R}_{ijkl}}\omega_{k}\otimes\omega_{l}$,
we conclude that

\begin{equation}\label{EquaGauss}
\sum_{a=m+1}^{N}(h_{aik}h_{ajl}-h_{ail}h_{ajk})=\mathcal{R}_{ijkl}
\end{equation}

Equation  (\ref{EquaGauss}) is called the Gauss
equation. \\

 We see that the exterior differential system
$\tilde{I}=\{(\omega_{i}-\eta_{i}), \omega_{a},
(\omega_{ij}-\eta_{ij}), \pi_{ai}\}$ when the Gauss's equation is
satisfied, generates the  exterior differential ideal $\mathcal{I}$.
Actually, the 1-forms $(\omega_{i}-\eta_{i})$ and $\omega_{a}$ belong to
$I$ and  to $\tilde{I}$. The 1-forms
$(\omega_{ij}-\eta_{ij})=0$. This implies that
$d(\omega_{i}-\eta_{i})=0$. The  1-forms $\pi_{ai}=0$, so  \linebreak 
$d\omega_{a}=0$. From the Gauss equation, $d(\omega_{ij}-\eta_{ij})=0$. Looking for integral  \linebreak elements of
 $I$ is equivalent
to looking for integral elements of $\tilde{I}$ for which  \linebreak  the Gauss
equation is satisfied. We shall proceed this in the 
following steps. Moreover, $\tilde{I}$ contains less differential
1-form  than the exterior differential \linebreak  system
$I$.

\subsubsection*{ Step 5:}

The functions $h_{aij}$ are symmetric  in their two last indeces. If we
consider an  $(N-m)$-dimensional euclidean space $\mathcal{W}$, then  the matrix $(h_{aij})$ can be viewed as a symmetric  element of
$\mathbb{R}^{m}$($i,j=1,\dots , m$) taking value in $\mathcal{W}$,
i.e. $(h_{aij})\in \mathcal{W}\otimes S^{2}(\mathbb{R}^{m})$.
Notice that $\displaystyle{dim\mathcal{W}\otimes
S^{2}(\mathbb{R}^{m})=(N-m)\frac{m(m+1)}{2}}$.

\begin{prop}\label{ProDimKm} Let $\mathcal{K}_{m}$ the set of Riemannian curvature tensors $\mathcal{R}$ such that:
\begin{enumerate}
\item \label{SymR}$\mathcal{R}_{ijkl}=\mathcal{R}_{klij}$. \item
\label{AntiSymR} $\mathcal{R}_{ijkl}=-\mathcal{R}_{jikl}$. \item
\label{BianchiR}
$\mathcal{R}_{ijkl}+\mathcal{R}_{kijl}+\mathcal{R}_{jkil}=0$.
\end{enumerate}
where the indeces $i,j,k$ and $l$ vary from 1 to $m$.  Then
\begin{equation}
dim\mathcal{K}_{m}=\frac{m^{2}(m^{2}-1)}{12}
\end{equation}
\end{prop}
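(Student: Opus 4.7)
The plan is to realize $\mathcal{K}_m$ as the kernel of the Bianchi map on the space of symmetric bilinear forms on $\Lambda^2 \mathbb{R}^m$, show that this map is surjective onto $\Lambda^4 \mathbb{R}^m$, and then compute the dimension by subtraction.

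First, I would observe that properties \ref{SymR} and \ref{AntiSymR} together imply antisymmetry in the last two indices as well: $\mathcal{R}_{ijkl} = \mathcal{R}_{klij} = -\mathcal{R}_{lkij} = -\mathcal{R}_{ijlk}$. Consequently, thinking of indices $(ij)$ and $(kl)$ as labels for a basis of $\Lambda^2\mathbb{R}^m$, a tensor satisfying \ref{SymR} and \ref{AntiSymR} is precisely an element of the space $W := S^2(\Lambda^2 \mathbb{R}^m)^{*}$ of symmetric bilinear forms on $\Lambda^2 \mathbb{R}^m$. Setting $N = \binom{m}{2}$, we have
\begin{equation}\nonumber
\dim W = \binom{N+1}{2} = \frac{m(m-1)\bigl(m^{2}-m+2\bigr)}{8}.
\end{equation}

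Second, I would introduce the Bianchi operator $b: W \to \Lambda^4 \mathbb{R}^m{}^{*}$ defined by $b(\mathcal{R})_{ijkl} = \mathcal{R}_{ijkl}+\mathcal{R}_{kijl}+\mathcal{R}_{jkil}$. A short check using \ref{SymR} and \ref{AntiSymR} shows that $b(\mathcal{R})$ is indeed totally antisymmetric in $i,j,k,l$, so $b$ is well-defined, and by construction $\mathcal{K}_m = \ker b$. The key step is to prove that $b$ is surjective. For this, I would exhibit an explicit right inverse: given any totally skew $T \in \Lambda^4\mathbb{R}^m{}^{*}$, the tensor $T_{ijkl}$ lies in $W$ (total antisymmetry implies both \ref{SymR} and \ref{AntiSymR}), and a direct computation using the sign of the relevant permutations yields $T_{kijl} = T_{jkil} = T_{ijkl}$, hence $b(T) = 3T$. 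Thus $\tfrac{1}{3}\iota: \Lambda^4\mathbb{R}^m{}^{*} \hookrightarrow W$ is a section of $b$, proving surjectivity.

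Third, applying the rank-nullity theorem gives
\begin{equation}\nonumber
\dim \mathcal{K}_m = \dim W - \binom{m}{4} = \frac{m(m-1)(m^{2}-m+2)}{8}-\frac{m(m-1)(m-2)(m-3)}{24}.
\end{equation}
A routine simplification, factoring $m(m-1)/24$, gives $3(m^{2}-m+2)-(m-2)(m-3) = 2m^{2}+2m = 2m(m+1)$, so the right-hand side equals $m^{2}(m^{2}-1)/12$, as claimed.

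The main obstacle is the surjectivity of $b$: the counting $\dim W - \binom{m}{4}$ is merely a lower bound unless we know the Bianchi constraints are linearly independent. The explicit section $T \mapsto \tfrac{1}{3}T$ circumvents any dimension-counting subtlety. Everything else is bookkeeping with binomial coefficients.
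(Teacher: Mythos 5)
Your proof is correct: the identification of the space cut out by conditions (1) and (2) with $S^{2}(\Lambda^{2}\mathbb{R}^{m})$, the verification that the Bianchi sum of such a tensor is totally antisymmetric, the explicit section $T\mapsto\tfrac{1}{3}T$ establishing surjectivity of $b$ onto $\Lambda^{4}(\mathbb{R}^{m})^{*}$, and the final rank--nullity computation are all sound, and the arithmetic checks out. The paper states this proposition without giving any proof, so there is nothing to compare against; your argument is the standard complete derivation and in particular correctly handles the one genuine subtlety (linear independence of the Bianchi constraints) via the explicit right inverse.
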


\begin{lemma}\label{LemmeSubSurjectThm}
Suppose that $\displaystyle{r=N-m\geq \frac{m(m-1)}{2}}$. Let $\mathcal{H}\subset
\mathcal{W}\otimes S^{2}(\mathbb{R}^{m})$ an open set containing the
elements $h=(h_{ij})$ such that the vectors  \linebreak $\{h_{ij} | 1\leq
i\leq j \leq m-1 \}$ are linearly independents as elements of
$\mathcal{W}$.  \linebreak The map
$\gamma:\mathcal{H}\longrightarrow\mathcal{K}_{m}$ that for $h\in
\mathcal{H}$ associate $\gamma(h)\in \mathcal{K}_{m}$ such that
$\displaystyle{\Big(\gamma(h)\Big)_{ijkl}=\sum_{a=m+1}^{N}h_{aik}h_{ajl}-h_{ail}h_{ajk}}$
, is  a surjective submersion.
\end{lemma}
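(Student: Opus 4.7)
The plan is in three parts. First, I would verify that $\gamma(h)$ actually belongs to $\mathcal{K}_m$, i.e., satisfies the three symmetries of Proposition~\ref{ProDimKm}. Writing $\gamma(h)_{ijkl} = \langle h_{ik},h_{jl}\rangle - \langle h_{il},h_{jk}\rangle$ with each $h_{ij}$ viewed as a vector in $\mathcal{W}$, the pair symmetry $R_{ijkl}=R_{klij}$ follows from symmetry of the Euclidean inner product, the antisymmetry $R_{ijkl}=-R_{jikl}$ from swapping $i \leftrightarrow j$, and the first Bianchi identity from expanding the cyclic sum and observing that the six resulting inner-product terms cancel in pairs using $h_{ij}=h_{ji}$. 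This is a routine verification that uses no hypothesis on $h$.

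Second, the main content of the lemma is to prove that $\gamma$ is a submersion at every $h\in\mathcal{H}$. A direct computation of the differential gives
\[
  d\gamma_h(v)_{ijkl} \;=\; \langle v_{ik}, h_{jl}\rangle + \langle h_{ik}, v_{jl}\rangle - \langle v_{il}, h_{jk}\rangle - \langle h_{il}, v_{jk}\rangle,
\]
and using the fourfold symmetry of $R\in \mathcal{K}_m$ (together with $v_{ij}=v_{ji}$) to simplify the pairing, all four contributions become equal, yielding
\[
  \langle d\gamma_h(v),\,R\rangle \;=\; 4\sum_{i,j,k,l} R_{ijkl}\,\langle v_{ik},\, h_{jl}\rangle .
\]
Hence surjectivity of $d\gamma_h$ is equivalent to injectivity of the adjoint
\[
  \mathcal{K}_m \;\longrightarrow\; \mathcal{W}\otimes S^2(\mathbb{R}^m), \qquad R \;\longmapsto\; \Big(\textstyle\sum_{j,l} R_{ijkl}\,h_{jl}\Big)_{(i,k)} .
\]

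The main obstacle is establishing this adjoint injectivity. Given $R \in \mathcal{K}_m$ with $\sum_{j,l} R_{ijkl}\,h_{jl} = 0$ for every $(i,k)$, I would split the sum according to whether each of $j,l$ is interior ($<m$) or boundary ($=m$), and exploit the hypothesis that the $\tfrac{m(m-1)}{2}$ vectors $\{h_{jl} : 1\le j \le l \le m-1\}$ are linearly independent in $\mathcal{W}$. Projecting the equation onto their span forces each purely interior coefficient $R_{ijkl}$ (with $j,l<m$) to equal a specific boundary-linear combination, while projecting onto the orthogonal complement gives additional vanishings involving $h_{jm}$ and $h_{mm}$; threading these through the first Bianchi identity and the antisymmetries of $R$ then collapses the remaining mixed and pure-boundary components to zero. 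The delicate part is the combinatorial bookkeeping: one must check that the number of independent constraints produced by this splitting matches $\dim \mathcal{K}_m = m^2(m^2-1)/12$, which is exactly where the bound $r\ge \tfrac{m(m-1)}{2}$ gets used. Once $d\gamma_h$ has been shown to be surjective at each $h\in\mathcal{H}$, the map is a submersion; surjectivity of $\gamma$ as a map then follows from the openness of $\gamma(\mathcal{H})$ combined with the homogeneity $\gamma(th)=t^2\gamma(h)$ and an explicit exhibition of $h$'s realizing a spanning family of curvature tensors in $\mathcal{K}_m$.
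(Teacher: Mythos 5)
The paper itself states this lemma without proof (it is imported from \cite{ExtDiffSys-Book}), so your proposal can only be judged on its own terms. Your first two parts are fine: checking that $\gamma(h)$ has the three curvature symmetries is routine, and the reduction of the submersion property to the implication
\begin{equation*}
\sum_{j,l=1}^{m}R_{ijkl}\,h_{jl}=0 \ \text{ for all } i,k \quad\Longrightarrow\quad R=0
\end{equation*}
is correct (the four terms in the pairing do coincide after relabelling, and since $\sum_{j,l}R_{ijkl}h_{jl}$ is symmetric in $(i,k)$, annihilating all symmetric $v$ forces it to vanish). The gap is in your third part: this implication is the entire content of the submersion claim, and you do not prove it — you describe a strategy whose announced mechanism is not the one that works. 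Checking that "the number of independent constraints matches $\dim\mathcal{K}_m$" cannot establish injectivity, since independence of those constraints is precisely what is at stake; and the inequality $r\ge m(m-1)/2$ is not used in that step at all — it only makes $\mathcal{H}$ nonempty. What the hypothesis actually gives is that the linear map $h:S^{2}\mathbb{R}^{m}\to\mathcal{W}$, $e_{j}e_{l}\mapsto h_{jl}$, satisfies $\ker h\cap S^{2}\mathbb{R}^{m-1}=0$, so that an element of $\ker h$ vanishes as soon as all of its $e_{j}e_{m}$-components do. Set $S_{ik,jl}=R_{ijkl}+R_{ilkj}$; this is symmetric in $(i,k)$, in $(j,l)$, and under exchange of the two pairs, and the hypothesis of the implication says that each $S_{ik}:=\sum_{j,l}S_{ik,jl}\,e_{j}e_{l}$ lies in $\ker h$. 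The argument is then a cascade, not bookkeeping: $S_{mm,jm}=R_{mjmm}+R_{mmmj}=0$ automatically, so $S_{mm}\in\ker h\cap S^{2}\mathbb{R}^{m-1}=0$, which gives $R_{mjml}=\tfrac12 S_{mm,jl}=0$; this in turn says $S_{jm,lm}=-R_{mjml}=0$, so each $S_{jm}=0$; then $S_{ik,jm}=S_{jm,ik}=0$, so every $S_{ik}=0$; finally $S=0$ forces $R=0$ by the standard polarization identity for algebraic curvature tensors — and that last step, not the cascade, is the only place the Bianchi identity enters, contrary to the role you assign it.

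Your closing argument for surjectivity of $\gamma$ (as opposed to the submersion property) also does not close. Openness of $\gamma(\mathcal{H})$ together with $\gamma(th)=t^{2}\gamma(h)$ only shows that the image is an open cone, and an open cone can be a proper subset of $\mathcal{K}_m$; "exhibiting a spanning family of curvature tensors" does not help because $\gamma(\mathcal{H})$ is not a linear subspace. To finish you need either an explicit $h\in\mathcal{H}$ over each $R\in\mathcal{K}_m$, or a single $h\in\mathcal{H}$ with $\gamma(h)=0$ (then a neighbourhood of $0$ lies in the image and the cone property gives everything), or some connectedness argument. As written, both essential assertions of the lemma — that $d\gamma_{h}$ is onto for every $h\in\mathcal{H}$ and that $\gamma$ is onto — remain unestablished.
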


\subsubsection*{Step 6: The existence of an $m$-dimensional ordinary integral element}

Let $\mathcal{I}$ the exterior ideal of
$M\times\mathcal{F}_{m}(\mathbb{E}^{N})$ generated by \linebreak 
$\displaystyle{s=N(m+1)-\frac{m(m+1)}{2}}$ 1-forms:
$$\{\underbrace{(\omega_{i}-\eta_{i})_{i=1,\dots ,
m}}_{m},\underbrace{(\omega_{a})_{a=m+1,\dots
N}}_{N-m},\underbrace{(\omega_{ij}-\eta_{ij})_{1\leq i< j \leq
m}}_{\frac{m(m-1)}{2}},\underbrace{(\pi_{ai})_{i=1,\dots , m,
a=m+1,\dots N}}_{(N-m)m}\}$$.

Let $\mathcal{Z}=\{(x,\Upsilon,h)\in
M\times\mathcal{F}_{m}(\mathbb{E}^{N})\times \mathcal{H} |
\gamma(h)=\mathcal{R}(x)\}$. $\mathcal{Z}$ is a submanifold (the
fiber of $\mathcal{R}$ by a  submersion. The surjectivity of
$\gamma$ ensure that $\mathcal{Z} \neq \emptyset$ ). So,
\begin{equation}
\begin{split}
dim\mathcal{Z}&=dim M + dim \mathcal{F}_{m}(\mathbb{E}^{N})+\dim
\mathcal{H} - \dim \mathcal{K}_{m}\\
&=\underbrace{m}_{dimM}+\underbrace{N(m+1)-\frac{m(m+1)}{2}}_{dim\mathcal{F}_{m}(\mathbb{E}^{N})}+\underbrace{(N-m)\frac{m(m+1)}{2}}_{dim\mathcal{H}}
-\underbrace{\frac{m^{2}(m^{2}-1)}{12}}_{dim\mathcal{K}_{m}}
\end{split}
\end{equation}

We define the map $\Phi:\mathcal{Z}\longrightarrow
\mathcal{V}_{m}(\mathcal{I},\Delta)$ that associate to
$(x,\Upsilon, h)$ the $m$-plane at $(x,\Upsilon)$ annihilated by
 the 1-forms that generate $\mathcal{I}$(the exterior \linebreak 
differential system $\tilde{I}$). The map $\Phi$ is an embedding and so
$\Phi(\mathcal{Z})$ is a  \linebreak submanifold of
$\mathcal{V}_{m}(\mathcal{I},\Delta)$. We will show that
$\Phi(\mathcal{Z})$ contains only ordinary integral elements. In the proof, 
we will use the proposition.\ref{PropCpdeHCp}.\\

Let  $(x,\Upsilon,h)\in \mathcal{Z}$ be a point. Let denote
$E=\Phi(x,\Upsilon,h)$ the integral  \linebreak element defined as follows:
$E=\{v\in
T_{(x,\Upsilon)}\Big(M\times\mathcal{F}_{m}(\mathbb{E}^{N})\Big) |
(\omega_{i}-\eta_{i})(v)=\omega_{a}(v)=(\omega_{ij}-\eta_{ij})(v)=\pi_{ai}(v)=0
\}$.\\

$E$ is an  $m$-dimensional integral element. As a matter of fact,  $s$  \linebreak is
the number of differential forms that generate the ideal
$\mathcal{I}$ and \linebreak 
$\displaystyle{dim\Big(M\times\mathcal{F}_{m}(\mathbb{E}^{N})\Big)-m=N(m+1)-\frac{m(m+1)}{2}=s}$.\\

We will apply word by word the proposition \ref{PropCpdeHCp}. Let
$\mathcal{I}$ the exterior ideal of $M\times
\mathcal{F}_{m}(\mathbb{E}^{N})$ defined above\footnote{$M\times
\mathcal{F}_{m}(\mathbb{E}^{N})$ play the role  of the
manifold  "$M$" in the proposition. \ref{PropCpdeHCp}.}. This
ideal does  not contain  $0$-forms. $E\in
\mathcal{V}_{m}(\mathcal{I})$ at  $(x,\Upsilon)\in
M\times \mathcal{F}_{m}(\mathbb{E}^{N})$. Let
$\omega_{i},
(\omega_{i}-\eta_{i}),\omega_{a},(\omega_{ij}-\eta_{ij}),\pi_{ai}$ be \linebreak  a coframe\footnote{There is $m+s
= dim\Big(M\times \mathcal{F}_{m}(\mathbb{E}^{N})\Big)$ 1-forms.}
of $M\times \mathcal{F}_{m}(\mathbb{E}^{N})$ in the neighborhood
of $(x,\Upsilon)$ such that\footnote{ the $(\omega_{i})_{i=1,\dots , m}$ play the role of "$\omega_{1},\omega_{2},\dots ,\omega_{n}$". the
$(\omega_{i}-\eta_{i}),\omega_{a},(\omega_{ij}-\eta_{ij}),\pi_{ai}$
play the role of  "$\pi_{s}$" in the  proposition
\ref{PropCpdeHCp}.} $E= \linebreak \{v\in
T_{x,\Upsilon}\Big(M\times\mathcal{F}_{m}(\mathbb{E}^{N})\Big) |
(\omega_{i}-\eta_{i})(v)=\omega_{a}(v)=(\omega_{ij}-\eta_{ij})(v)=\pi_{ai}(v)=0
\}$. \\

For $p\leq m$, we define the  $p$-dimensional integral element\footnote{the exterior differential system
 $I$ play the role of $\{\varphi_{1},\varphi_{2},\dots ,
\varphi_{r}\}$" in the  proposition \ref{PropCpdeHCp}.} \linebreak 
$E_{p}=\{x\in E | \omega_{k}(v)=0 \text{ pour tout } k
> p\}$ .\footnote{$E_{p}\in \mathcal{V}_{p}(\mathcal{I},\Delta)$
Because it is annihilated by  $s+m-p$ differential 1-forms .}. We
obtain so,  an integral flag $(0)_{(x,\Upsilon)}=E_{0}\subset
E_{1}\subset E_{2}\subset \dots \subset E_{m}\subset
T_{(x,\Upsilon)}\Big(M\times\mathcal{F}_{m}(\mathbb{E}^{N})\Big)$.
We remind that $I=\{\underbrace{(\omega_{i}-\eta_{i}),
\omega_{a},(\omega_{ij}-\eta_{ij})}_{\text{differential } 1-\text{forms}}, \underbrace{d(\omega_{i}-\eta_{i}),
d\omega_{a},d(\omega_{ij}-\eta_{ij})}_{\text{differential } 2-\text{forms}}\}$..\\

By computing  $d(\omega_{i}-\eta_{j}),d\omega_{a}$ and
$d(\omega_{ij}-\eta_{ij})$, we shall find the differential forms
that are linear combinations of the forms which generate
$\mathcal{I}$.\footnote{the forms that play the role of
$\pi_{\rho}^{J}$ in the proposition \ref{PropCpdeHCp}.}

After simple calculations, we find that

\begin{equation}\label{1formei}
d\omega_{a}\equiv -\sum_{i=1}^{m}\pi_{ai}\wedge\omega_{i}
\end{equation}

and

\begin{equation}
d(\omega_{ij}-\eta_{ij})=\underbrace{\sum_{a=m+1}^{N}\pi_{ai}\wedge\pi_{aj}}_{\blacklozenge}+
\sum_{k=1}^{m}\Big(\sum_{a=m+1}^{N}h_{ajk}\pi_{ai}-h_{aik}\pi_{aj}\Big)\wedge\omega_{k}
\end{equation}

the term $(\blacklozenge)$ is  quadratic in $\pi_{ai}$ and vanishes
on $\mathbb{X}$.\footnote{($\blacklozenge$) play the role of
$\tilde{\varphi}_{\rho}$ in the proposition \ref{PropCpdeHCp}.}\\

According to proposition \ref{PropCpdeHCp}., $c_{p}$ represente
the number of linear \linebreak independent  differential  1-forms.\\

\begin{tabular}{|c|c|c|}
\hline
The differential 1-forms  & The indexes & Number of linear \\
 & & independent 1-forms\\
\hline
$\omega_{i}-\eta_{i}$ & $1\leq i\leq m$ & $m$\\
\hline
$\omega_{a}$ & $m+1\leq a\leq N$ & $N-m$\\
\hline
$\omega_{ij}-\eta_{ij}$ & $1\leq i < j\leq m$ & $\displaystyle{\frac{m(m-1)}{2}}$\\
\hline
$\pi_{ai}$  & $1 \leq i\leq p$,& $(N-m)p$\\
 & $m+1\leq a\leq N$ &\\
\hline
$\displaystyle{\sum_{a=m+1}^{N}(h_{aik}\pi_{aj}-h_{ajk}\pi_{ai})}$
&
$1\leq k \leq p$,& $\displaystyle{p\frac{(m-p)(m-p-1)}{2}+}$\\
 &  $1\leq i \leq j \leq m$ & $\displaystyle{\frac{p(p+1)}{2}(m-p)}$\\
\hline
\end{tabular}

\paragraph{}Finally, by the sum of the number of linear independent 1-forms of the above table,
 $c_{p}$ is the  codimension of
$H(E_{p})$  in $G_{m}\Big(T(M\times
\mathcal{F}_{m}\mathbb{E}^{N})\Big)$ defined earlier, and is equal
to:

\begin{equation}
c_{p}=N+\frac{m(m-1)}{2}+(N-m)p+\frac{mp(m-p)}{2}
\end{equation}

so,

\begin{equation}
\sum_{p=0}^{m-1}c_{p}=\frac{Nm(m+1)}{2}+\frac{m^{2}(m^{2}-1)}{12}.
\end{equation}

To apply the proposition \ref{PropCpdeHCp} and show that  $E$ is
an $m$-dimensional  \linebreak ordinary integral element, we need just to
compute the codimension of $\Phi(\mathcal{Z})$  \linebreak in
$G_{m}(\mathcal{I},\Delta)$. \\

Let  $\mathfrak{U}$ an open set of
$\mathcal{F}_{m}(\mathbb{E}^{N})$. So,  $dim
(M\times\mathfrak{U})=\displaystyle{N(m-1)-\frac{m(m+1)}{2}+m}.$
We remid that if $E$ is an $n$-dimensional euclidean space, then
the space of all  $p$-planes of $E$ ($G_{p}(E)$), with $p<n$, is
of  $p(n-p)$ dimension. Let $(x,\Upsilon) \in
M\times\mathfrak{U}$,

\begin{equation}\nonumber
\begin{split}
&dim G_{m}\Big(T(M\times \mathfrak{U})\Big)=dim
G_{m}\Big(T_{(x,\Upsilon)}(M\times\mathfrak{U})\Big)+ dim
(M\times\mathfrak{U})\\
&=m\Big(\underbrace{N(m+1)-\frac{m(m+1)}{2}+m}_{\tiny{dim
T_{(x,\Upsilon)}(M\times\mathfrak{U})=dim (M\times
\mathfrak{U})}}-m\Big)+ \underbrace{N(m+1)-\frac{m(m+1)}{2}+m}_{\tiny{dim
(M\times \mathfrak{U})}}\\
&=m\Big(N(m+1)-\frac{m(m+1)}{2}\Big)+ N(m+1)-\frac{m(m+1)}{2}+m
\end{split}
\end{equation}

Since that $\Phi$ is an embedding,
$dim\Phi(\mathcal{Z})=dim\mathcal{Z}$, so

\begin{equation}\nonumber
\begin{split}
& dim G_{m}\Big(T(M\times \mathfrak{U})\Big)-dim
\Phi(\mathcal{Z})=\\
&\underbrace{Nm(m+1)-\frac{m^{2}(m+1)}{2}+N(m+1)-\frac{m(m+1)}{2}+m}_{\tiny{dimG_{m}\Big(T(M\times
\mathfrak{U})\Big)}}
\\&\ \ \ \underbrace{-m-N(m+1)+\frac{m(m+1)}{2}
-\frac{Nm(m+1)}{2}+\frac{m^{2}(m+1)}{2}+\frac{m^{2}(m^{2}-1)}{12}}_{\tiny{dim
\Phi(\mathcal{Z})}}\\
&=\frac{Nm(m+1)}{2}+\frac{m^{2}(m^{2}-1)}{12}
\end{split}
\end{equation}
 We conclude that the  codimension of $\Phi(\mathcal{Z})$ in
$G_{m}\Big(T(M\times \mathfrak{U})\Big)$ is equal to
$c_{0}+c_{1}+\dots +c_{m-1}$. By the Cartan's test,
$E\in\mathcal{V}_{m}(\mathcal{I},\Omega)$ is an ordinary integral
element of $\mathcal{I}$. The Cartan-K\"ahler theorem (Corollary
\ref{CorCartanKahler}.) ensure the existence of an integral
manifold $\mathbb{X}$ passing through $(x,\Upsilon)$ and having
$E$ as a tangent space at $(x,\Upsilon)$.\\
 $E\in \mathcal{V}_{m}(\mathcal{I},\Omega)$, In particular, $E\in
 \mathcal{V}_{m}(\mathcal{I}_{0},\Omega)$. By the proposition
 \ref{ProCartanJanet}. , there exists an isometric embedding of
 $(M,g)$ in $(\mathbb{E}^{N},\varepsilon_{N})$.

\section{Local Conformal Embedding Problem}
\begin{definition}
Let $(M,g)$ and $(N,h)$ be two real analytic Riemannian  \linebreak manifolds of
dimension $m$ and $n$ respectively. Let $f$ be  a map from  $M$ to
$N$.  \linebreak Then $f$ is a conformal embedding from $(M,g)$ to $(N,h)$ if:
\begin{enumerate}
\item $f$ is a local diffeomorphism; \item $f^{\ast}h=Sg$, where
$S:M\longrightarrow \mathbb{R}^{+}$ is a stricly positive function
on $M$.
\end{enumerate}
\end{definition}

\begin{theorem}(Jacobowitch-Moore \cite{JacobowitchMoore-Art})\\
If $dimN=n \geq \frac{1}{2}m(m+1)-1$, then each  point $p \in M$
admit a neighborhood on $M$ which can be conformally embedded in  $N$.
\end{theorem}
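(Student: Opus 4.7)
The plan is to mirror the six-step proof of Theorem \ref{ThBCJS} very closely, with three essential modifications: (a) the target $N$ is no longer flat, so the structure equations on the frame bundle $\mathcal{F}_m(N)$ carry a non-zero curvature $\Omega^N_{\mu\nu}$; (b) the condition to realize is $f^{\ast}h = Sg$ rather than $f^{\ast}h = g$, so the conformal factor must be carried along as an extra unknown; and (c) one additional parameter absorbs one unit of codimension, which is exactly why the dimensional threshold drops by one, from $\tfrac{m(m+1)}{2}$ to $\tfrac{m(m+1)}{2}-1$. Concretely, I would write $S=\sigma^{2}$ with $\sigma>0$ and work on the product $M\times\mathcal{F}_m(N)\times\mathbb{R}^{+}$ (coordinate $\sigma$), taking an orthonormal coframe $\eta_i$ adapted to $g$ on $M$ and an orthonormal coframe $\omega_\mu=(\omega_i,\omega_a)$ on $\mathcal{F}_m(N)$.

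The Pfaffian system that replaces $I_0$ is
\[
\tilde I_0=\{\,\omega_i-\sigma\eta_i,\ \omega_a\,\},\qquad i=1,\dots,m,\ a=m+1,\dots,n,
\]
and the analogue of Proposition \ref{ProCartanJanet} is immediate: on an $m$-dimensional integral manifold with $\omega_1\wedge\cdots\wedge\omega_m\neq 0$, the projection to $M$ is a local diffeomorphism $f$ and $f^{\ast}h=\sigma^{2}g$. To close $\tilde I_0$ I take exterior derivatives: on the would-be integral manifold
\[
d(\omega_i-\sigma\eta_i)\equiv -\sum_{j}(\omega_{ij}-\eta_{ij})\wedge\sigma\eta_j-(d\sigma-\sigma\tau)\wedge\eta_i\pmod{\tilde I_0},
\]
which, by Cartan's Lemma \ref{LemmedeCartan} applied with $\sigma\eta_1,\dots,\sigma\eta_m,\eta_i$ playing the role of the independent $1$-forms, forces $\omega_{ij}-\eta_{ij}$ and $d\log\sigma$ to be specific combinations of the $\eta_k$; the Levi-Civita pull-back identity $f^{\ast}\omega_{ij}=\eta_{ij}$ of the isometric case is now replaced by a conformal connection identity of the form $\omega_{ij}-\eta_{ij}=\tau_i\eta_j-\tau_j\eta_i$ where the $1$-form $\tau=d\log\sigma$ encodes the conformal change. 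Adjoining these forms yields an enlarged system $\tilde I_1$ whose differentiation, together with the structure equations $d\omega_a+\sum_\mu\omega_{a\mu}\wedge\omega_\mu=0$, produces by Cartan's lemma the symmetric second fundamental form coefficients $h_{aij}$ with $\pi_{ai}=\omega_{ai}-\sum_j h_{aij}\omega_j$, and finally the \emph{conformal Gauss equation}
\[
\sum_{a=m+1}^{n}(h_{aik}h_{ajl}-h_{ail}h_{ajk}) = \sigma^{2}\mathcal{R}^{M}_{ijkl}-\mathcal{R}^{N}_{ijkl}+T_{ijkl}(\tau),
\]
where $T_{ijkl}(\tau)$ is the universal Kulkarni-Nomizu-type expression in the $1$-form $\tau=d\log\sigma$ produced by the conformal change of connection.

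Having obtained the closed ideal $\tilde{\mathcal{I}}$ generated by $(\omega_i-\sigma\eta_i),\omega_a,(\omega_{ij}-\eta_{ij}-\tau_i\eta_j+\tau_j\eta_i),\pi_{ai}$, I would then copy Steps 5 and 6 of the isometric proof: introduce $\mathcal{Z}\subset M\times\mathcal{F}_m(N)\times\mathbb{R}^{+}\times\mathcal{H}$ as the zero locus of the conformal Gauss equation and define $\Phi:\mathcal{Z}\to\mathcal{V}_m(\tilde{\mathcal{I}},\Delta)$ sending each point to the $m$-plane annihilated by the generators. Applying Proposition \ref{PropCpdeHCp} to the integral flag $E_0\subset\cdots\subset E_m=E$, I would compute each $c_p$ as before, the only change being an additional $+1$ contribution in one of the $c_p$ coming from $d\sigma$, matched on the other side by the extra dimension of the $\sigma$-factor in $\mathcal{Z}$. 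Summing, I expect
\[
\sum_{p=0}^{m-1}c_{p}=\operatorname{codim}\Phi(\mathcal{Z})\quad\text{in}\quad G_m\bigl(T(M\times\mathcal{F}_m(N)\times\mathbb{R}^{+})\bigr),
\]
so that Cartan's test (Theorem \ref{TestCartan}) certifies $E$ as ordinary, and Corollary \ref{CorCartanKahler} delivers the integral manifold $\mathbb{X}$; its projection to $M$ is the desired local conformal embedding.

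The main obstacle, by far, will be establishing the conformal analogue of Lemma \ref{LemmeSubSurjectThm}: one must show that the map $(\sigma,h)\mapsto \sigma^{2}\mathcal{R}^{M}-\mathcal{R}^{N}+T(\tau)+[h,h]$ from $\mathbb{R}^{+}\times\mathcal{W}\otimes S^{2}(\mathbb{R}^{m})$ (with $\mathcal{W}$ now of dimension $n-m$) into $\mathcal{K}_m$ is a surjective submersion precisely under the hypothesis $n-m\geq\tfrac{m(m-1)}{2}-1$. The extra $\sigma$-parameter contributes only one direction in the image, but it hits the trace part of the curvature tensor in a controlled way, so verifying surjectivity requires inspecting how $\sigma^2 \mathcal{R}^M$ and the $T(\tau)$ term interact with the quadratic form $[h,h]$ on the Ricci-trace direction. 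Once this linear-algebraic lemma is in place, every remaining step is a bookkeeping adaptation of the isometric argument.
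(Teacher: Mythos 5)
Your proposal follows the same route the paper itself sketches for this theorem --- Cartan's method mirroring the six-step isometric proof, with the conformal factor carried as an extra unknown and the surjectivity of a conformal Gauss map as the crux --- so in substance you and the paper agree; note that the paper gives only an outline here and defers all details to Jacobowitz--Moore. The one structural discrepancy worth flagging: the paper's configuration space is $\mathcal{F}(M)\times\mathcal{F}(N)\times\mathbb{R}^{m}\times\mathbb{R}^{+}_{*}$, and the $\mathbb{R}^{m}$ factor is not optional. In your setup the identity $\tau=d\log\sigma=\sum_{i}\tau_{i}\eta_{i}$ only makes sense after restriction to the integral manifold; on the ambient product $d\sigma$ is an independent coordinate differential, so the coefficients $\tau_{i}$ must be adjoined as new coordinates (exactly as you already do for the $h_{aij}$ via $\mathcal{H}$ and $\mathcal{Z}$), and a $1$-form such as $d\sigma-\sigma\sum_{i}\tau_{i}\eta_{i}$ must be added to the Pfaffian system so that the conformal connection identity and the term $T_{ijkl}(\tau)$ in your conformal Gauss equation are well defined off the integral manifold. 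This is a bookkeeping repair rather than a conceptual one, and your identification of the conformal analogue of Lemma \ref{LemmeSubSurjectThm} as the genuine difficulty --- with the extra parameter accounting for the drop of the threshold from $\tfrac{1}{2}m(m+1)$ to $\tfrac{1}{2}m(m+1)-1$ --- matches where the actual work lies in the cited proof.
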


Jacobowitch and Moore gave two differents  proofs of  this result; one is based on Janet's method and the second on Cartan's method which is close to the proof of Burstin-Cartan-Janet-Schlaefli theorem that we gave.

Roughly speaking, we consider $\mathcal{F}(M)\times \mathcal{F}(N)\times\mathbb{R}^{m}\times\mathbb{R}^{+}_{*}$ and we look for integral manifolds of $I_{0}=\{\omega_{i}- S\eta_{i}, \omega_{a}\}$ (the forms are defined as on the previous section). Similarly, we  can extend the exterior differential system to obtain a closed one. We lead the deatils of the proof for the reader who should take care of $S$ when he apply the exterior differentiation cause it's a function. When the new exterior differential system is involutive, we look for ordinary integral element and so conclude. (see \cite{JacobowitchMoore-Art}).

\bibliographystyle{plain}

\end{document}